\theoremstyle{plain}
\newtheorem{thm}{Theorem}[section]
\newtheorem{lem}[thm]{Lemma}
\newtheorem{defn}[thm]{Definition}
\newtheorem{prop}[thm]{Proposition}
\newcommand{\V}{\mathcal{V}}
\DeclareMathOperator{\Aut}{Aut}
\DeclareMathOperator{\Tr}{Tr}
\begin{document}

\title{Switching equivalence of strongly regular polar graphs}

\author{G\'{a}bor P. Nagy}
\address{Bolyai Institute, University of Szeged, Aradi V\'{e}rtan\'{u}k tere 1, H-6720 Szeged, Hungary; and 
Institute of Mathematics, Budapest University of Technology and Economics, M\H{u}egyetem rkp. 3, H-1111 Budapest, Hungary.}
\email{nagyg@math.u-szeged.hu, nagy.gabor.peter@ttk.bme.hu}
\author{Valentino Smaldore}
\address{Dipartimento di Tecnica e Gestione dei Sistemi Industriali,
Universit\`{a} degli Studi di Padova, Stradella S. Nicola 3, 36100 Vicenza, Italy.}
\email{valentino.smaldore@unipd.it}

\maketitle
\begin{abstract}
We prove the switching equivalence of the strongly regular polar graphs $NO^\pm(4m,2)$, $NO^\mp(2m+1,4)$, and $\Gamma(O^\mp(4m,2))$ with an isolated vertex by giving an analytic description for them and their associated two-graphs.
\end{abstract}

\section*{Introduction}
A \textit{strongly regular graph} with parameters $(v,k,\lambda,\mu)$ is a graph with $v$ vertices where each vertex is incident with $k$ edges, any two adjacent vertices have $\lambda$ common neighbors, and any two non-adjacent vertices have $\mu$ common neighbors. Two strongly regular graphs have the same parameters if and only if they are cospectral, see for instance \cite{brvan}. Strongly regular graphs were introduced by R. C. Bose in \cite{Bose} in 1963, and ever since then they have been intensively investigated. 

Many interesting and well-studied examples of strongly regular graphs belong to polar spaces. The \textit{tangent graph} of a polar space $\mathcal{P}$ embedded in a projective space is defined as the graph in which the vertices are non-isotropic points of the polarity that defines $\mathcal{P}$, and two vertices are adjacent if and only if they lie on the same tangent line to $\mathcal{P}$. In \cite{brvan} it is shown that in some cases tangent graphs are strongly regular. Here we focus on the graph $NO^{\pm}(2m,2)$, whose vertices are non-isotropic points w.r.t. a non-degenerate hyperbolic ($+1$ case) or elliptic ($-1$ case) quadric $Q^{\pm}(2m-1,2)$ in a projective space of odd dimension over the binary field. See \cite{brvan} for more details.

The graph $NO^{\pm}(2m+1,q)$ is the graph defined on the non-singular hyperplanes of one type with respect to a parabolic quadric $Q(2m,q)$ in a projective space of even dimension, where two vertices are adjacent if the intersection of the corresponding hyperplanes meets the quadric in a degenerate section. This construction belongs to an idea of Wilbrink, reported later in \cite{brVL}. This paper aims to show the switching equivalence between the graph $NO^{\pm}(4m,2)$ and the graph $NO^{\mp}(2m+1,4)$.

Graphs are called \textit{switching equivalent} if we can obtain one from the other by applying a series of \textit{switching} of edges and non-edges between subsets of vertices. We will be mainly interested in the \textit{Seidel switching}, introduced by Seidel in \cite{seidel}, related to the concept of two-graph. Seidel switching under certain conditions can modify the adjacency matrix of a given graph to produce cospectral graphs.

A two-graph is a set of vertices and an incidence relation for triples of vertices. A two-graph is regular if each pair of vertices is contained in a constant number of triples. See \cite{BH, Tay} for more information on two-graphs, while in \cite{table} the authors made a list of two-graphs of small order. Two graphs are switching equivalent if and only if they have the same associated two-graph.

The paper is organized as follows. In Section \ref{sec:prelim} we give some preliminaries on symplectic and orthogonal polarities in characteristic two, while in Section \ref{sec:srg-two-graphs} we focus on strongly regular polar graphs, Seidel switching, and two-graphs. In Section \ref{sec:analytic} we give useful analytic descriptions of the graphs $NO^{\pm}(2m,2)$, $NO^{\mp}(2m+1,q)$, $q=2^h$. In Section \ref{sec:main-two-graphs} we define the classes $\mathcal{X}^\pm_{2m}$ of two-graphs with doubly transitive automorphism group $Sp(2m,2)$, and prove the main theorem on the two-graphs of $NO^{\pm}(4m,2)$, $NO^{\mp}(2m+1,4)$. Finally, Appendix \ref{sec:appendix} reports some useful concepts and tools about symplectic groups and quadrics in characteristic two.

\section{Symplectic and orthogonal groups in characteristic two}\label{sec:prelim}

\subsection{Quadratic forms and alternating bilinear forms}
Consider the $2m$-dimensional vector space $V=\mathbb{F}_q^{2m}$, $q=2^{h}$, of row vectors. The map $\Theta:V\to \mathbb{F}_q$ is a \textit{quadratic form,} if $\Theta(\lambda u)=\lambda^2\Theta(u)$ for all $\lambda\in \mathbb{F}_q$, $u\in V$, and
\[f(u,v)=\Theta(u+v)+\Theta(u)+\Theta(v)\]
is a bilinear form on $V$. In this case, we say that $\Theta$ \textit{linearizes} to $f$. In even dimension, the quadratic form $\Theta$ is \textit{non-degenerate,} if its associated bilinear form is non-degenerate. We use the notation $u^\perp, U^\perp$ for the orthogonal complement of $u\in V$, or $U\subset V$. In characteristic two, quadratic forms always linearize to alternating bilinear forms: $f(u,u)=0$ for all $u\in V$. A bilinear form is \textit{symplectic} if it is non-degenerate and alternating.

Take the matrices
\[E=\Big(\begin{array}{cc}
    0 & I_{m} \\
    0 & 0
\end{array}\Big)\]
and \[F=E+E^{T}=\Big(\begin{array}{cc}
    0 & I_{m} \\
    I_{m} & 0
\end{array}\Big).\] 
We define the symplectic form $\langle .,. \rangle$ by
\begin{equation}\label{eq:ALT}
 \langle u,v\rangle=uFv^{T}. 
\end{equation}
Throughout this paper, we denote by $\Omega$ the set of quadratic forms $V\to \mathbb{F}_q$ that linearize to $\langle .,. \rangle$. Then 
\[\vartheta_{0}(u)=uEu^{T}\]
and 
\begin{equation} \label{eq:theta_a}
\vartheta_{a}(u)=\vartheta_{0}(u)+\langle a,u\rangle^{2}
\end{equation}
are both quadratic forms in $\Omega$. The map $\Theta+\vartheta_0$ is additive and $(\Theta+\vartheta_0)(\lambda u)=\lambda^2 (\Theta+\vartheta_0)(u)$. As elements in $\mathbb{F}_q$ have a unique square root, $(\Theta+\vartheta_0)^\frac{1}{2}$ is a well-defined linear map $V\to \mathbb{F}_q$. This implies $\Theta=\vartheta_a$ for some $a\in \mathbb{F}_q^{2m}$, and
\[\Omega=\{\vartheta_{a}\mid a\in\mathbb{F}_q^{2m}\}.\]

Let $\Theta:\mathbb{F}_q^{2m} \to \mathbb{F}_q$ be a non-degenerate quadratic form. Then $\Theta$ has Witt index $m-1$ or $m$. In the first case, $\Theta$ is \textit{elliptic,} and it is equivalent to
\[h(x_1,x_2)+x_3x_4+\cdots+x_{2m-1}x_{2m},\]
where $h(x,y)$ is an irreducible quadratic form in two variables. If the Witt index is $m$, then $\Theta$ is \textit{hyperbolic,} and it is equivalent to 
\[x_1x_2+x_3x_4+\cdots+x_{2m-1}x_{2m}.\]
Elliptic or hyperbolic type quadratic forms are also called type $-1$ or type $+1$, respectively. $\vartheta_0$ is of hyperbolic type.

In odd dimension, the behavior of quadratic and symplectic bilinear forms is slightly different. The quadratic form $\Theta:\mathbb{F}_q^{2m+1} \to \mathbb{F}_q$ has Witt index at most $m$. Quadratic forms of maximum Witt index are non-degenerate. Such forms are equivalent to
\[x_1^2+x_2x_3+\cdots+x_{2m}x_{2m+1}.\]
In this case, we say that $\Theta$ is \textit{parabolic,} or that $\Theta$ is of type $0$. 

\subsection{Symplectic and orthogonal transformations}
As before, $V=\mathbb{F}_q^{2m}$, $q=2^h$. The linear map $A:V\to V$ is a \textit{symplectic transformation} w.r.t. the symplectic form $\langle .,. \rangle$ provided
\[\langle uA,vA\rangle=\langle u,v\rangle\]
holds for all $u,v\in V$. As $\langle .,. \rangle$ is non-degenerate, a symplectic transformation is invertible; they form the group symplectic group $Sp(2m,q)$. The affine symplectic group $ASp(2m,q)$ consists of maps $u\mapsto uA+b$, where $A\in Sp(2m,q)$ and $b\in V$. Since $AFA^{T}=F$ we see that $Sp(2m,q)\leq SL(2m,q)$, i.e., all symplectic transformations have determinant $1$. 

For any linear transformation $A:V\to V$ and quadratic form $\Theta:V\to \mathbb{F}_q$, we define the map
\[\Theta^A:V\to \mathbb{F}_q, \qquad \Theta^A(u)=\Theta(uA^{-1}).\]
$\Theta^A$ is a quadratic form, and $(\Theta,A)\mapsto \Theta^A$ is a group action. If $A$ is symplectic w.r.t. $\langle .,. \rangle$, then $A$ preserves $\Omega$. In particular, $Sp(2m,q)$ acts on $\Omega$. 

Let $\Theta$ be a fixed quadratic form, and let $\varepsilon\in\{-1,0,1\}$ denote its type. The \textit{orthogonal group} (associated to $\Theta$) is defined as
\[O^\varepsilon(n,q) = \{A\in GL(n,q) \mid \Theta(uA)=\Theta(u) \text{ for all $u \in V$} \}.\]

Let $n=2m+1$ and $\Theta$ a parabolic quadratic form. If we linearize $\Theta$, then we obtain a degenerate alternating form with $1$-dimensional radical $V_0$. The factor space $V/V_0$ has a non-degenerate alternating form. This induces the group isomorphism
\[O(2m+1,q)\cong Sp(2m,q)\]
for finite fields of characteristic two, see \cite{grove}. 

The following three theorems summarize common knowledge on the structure of symplectic groups and their action on quadratic forms. Since it is hard to find concise proof for these properties in the literature, we give the basic ideas in the Appendix.

\begin{thm} \label{th:structure_sp_2m_q}
    Let $q$ be a power of two, and $\langle .,. \rangle$ a symplectic bilinear form on $\mathbb{F}_q^{2m}$. Let $\Omega$ be the set of quadratic forms $V\to \mathbb{F}_q$ which linearize to $\langle .,. \rangle$. 
    \begin{enumerate}[(i)]
        \item The symplectic group $Sp(2m,q)$ has two orbits 
        \begin{align*}
            \Omega^+ &= \{\vartheta_a \in \Omega \mid \Tr_{\mathbb{F}_q/\mathbb{F}_{2}}(\vartheta_0(a))=0\}, \\
            \Omega^- &= \{\vartheta_a \in \Omega \mid \Tr_{\mathbb{F}_q/\mathbb{F}_{2}}(\vartheta_0(a))=1\}
        \end{align*}
        on  $\Omega$.
        \item $|\Omega^+|=\frac{1}{2}(q^{2m}+q^{m})$ and $|\Omega^-|=\frac{1}{2}(q^{2m}-q^{m})$. 
        \item Let $R^+$ (resp. $R^-$) be the stabilizer subgroup of $\Theta\in \Omega^+$ (resp. $\Theta\in \Omega^-$) in $Sp(2m,q)$. $R^+$ (resp. $R^-$) is isomorphic to $O^+(2m,q)$ (resp. $O^-(2m,q)$), and its action on $\Omega$ is equivalent to the action of $O^+(2m,q)$ (resp. $O^-(2m,q)$) on $V$.
    \end{enumerate}
\end{thm}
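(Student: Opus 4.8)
The plan is to work on the space $V$ itself, through the bijection $a\mapsto\vartheta_a$ (it is injective because $\langle a-b,u\rangle^2=0$ for all $u$ forces $a=b$, and surjective by the discussion preceding the theorem, so $|\Omega|=q^{2m}$). The $Sp(2m,q)$-action on $\Omega$ is the restriction of the change-of-variables action of $GL(2m,q)$ on quadratic forms, so two forms $\vartheta_a,\vartheta_b$ lie in the same $Sp(2m,q)$-orbit iff they are equivalent as quadratic forms \emph{and} the equivalence can be realised by a map fixing $\langle.,.\rangle$. Since any linear map carrying one element of $\Omega$ to another carries the common linearization $\langle.,.\rangle$ to itself, it is automatically symplectic; hence the $Sp(2m,q)$-orbits on $\Omega$ are exactly the equivalence classes of quadratic forms, which over $\mathbb{F}_q$ in even dimension are governed by the type alone. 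Part (i) thus reduces to the arithmetic statement
\[
\vartheta_a\ \text{is of hyperbolic type}\iff \Tr_{\mathbb{F}_q/\mathbb{F}_{2}}(\vartheta_0(a))=0 .
\]

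To prove this, I would expand $\vartheta_a$ in coordinates. As squaring is additive in characteristic two, $\langle a,u\rangle^2=\sum_{i=1}^m(a_{m+i}^2u_i^2+a_i^2u_{m+i}^2)$, so $\vartheta_a=\bigoplus_{i=1}^m Q_i$ is an orthogonal direct sum of the non-degenerate binary forms $Q_i(x,y)=xy+a_{m+i}^2x^2+a_i^2y^2$. A routine substitution reduces the equation $Q_i(x,y)=0$ (for $y\ne0$) to an Artin--Schreier equation $s^2+s=a_i^2a_{m+i}^2$, so $Q_i$ has a nonzero zero, i.e.\ is hyperbolic, exactly when $\Tr(a_i^2a_{m+i}^2)=\Tr(a_ia_{m+i})=0$. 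Since the type of an orthogonal sum is the product of the types (two anisotropic binary forms combine to a hyperbolic plane), the type of $\vartheta_a$ equals $(-1)^{\sum_i\Tr(a_ia_{m+i})}=(-1)^{\Tr(\vartheta_0(a))}$, which is the claim; the classification of quadratic forms then makes $Sp(2m,q)$ transitive on each of $\Omega^+$ and $\Omega^-$, finishing (i). For (ii) it suffices to compute the character sum $\sum_{a\in V}(-1)^{\Tr(\vartheta_0(a))}=\prod_{i=1}^m\sum_{x,y\in\mathbb{F}_q}(-1)^{\Tr(xy)}=q^m$, each inner sum being $q$ since $\sum_y(-1)^{\Tr(xy)}$ vanishes for $x\ne0$; together with $|\Omega^+|+|\Omega^-|=q^{2m}$ this yields $|\Omega^\pm|=\frac12(q^{2m}\pm q^m)$.

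For (iii), the stabilizer $R^{\varepsilon}$ of $\Theta\in\Omega^{\varepsilon}$ in $Sp(2m,q)$ consists of the $A$ with $\Theta(uA^{-1})=\Theta(u)$ for all $u$, which is precisely $O^{\varepsilon}(2m,q)$; conversely every element of $O^{\varepsilon}(2m,q)$ preserves the linearization $\langle.,.\rangle$ of $\Theta$ and so lies in $Sp(2m,q)$, whence $R^{\varepsilon}=O^{\varepsilon}(2m,q)$. To identify the actions, re-centre the bijection at $\Theta=\vartheta_b$: since $\vartheta_a=\Theta+\langle a+b,.\rangle^2$, match $\vartheta_a$ with $a+b\in V$. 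For $A\in R^{\varepsilon}$ a short computation using $\Theta^A=\Theta$ and the identity $FA^{-1}F=A^T$ (a rewriting of $AFA^T=F$) gives $\vartheta_a^A=\Theta+\langle(a+b)A,.\rangle^2$, i.e.\ the $R^{\varepsilon}$-action on $\Omega$ is carried by the bijection to the natural action $v\mapsto vA$ of $O^{\varepsilon}(2m,q)$ on $V$.

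The arithmetic core and the counting are routine, so the main obstacle is marshalling the characteristic-two input from the theory of quadratic forms: that non-degenerate quadratic forms in even dimension over $\mathbb{F}_q$ are classified by type, that any equivalence between two members of $\Omega$ is symplectic, and that type is genuinely multiplicative under orthogonal sums. These are exactly the facts deferred to the Appendix, and keeping the \emph{quadratic}-form versions straight — rather than the differently behaved bilinear-form analogues — is where care is needed.
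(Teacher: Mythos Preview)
Your argument is correct, and part (iii) matches the paper essentially verbatim. Parts (i) and (ii), however, take a different route from the Appendix.

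For (i), the paper also uses the observation that $GL$-equivalence within $\Omega$ implies $Sp$-equivalence (this is Lemma~\ref{lm:qforms-equiv}), so that the orbits are exactly the two types. But to pin down which trace value corresponds to which type, the paper goes through symplectic transvections: Lemma~\ref{lm:traction-omega} shows that $\vartheta_a$ and $\vartheta_b$ are related by a transvection $T_c$ if and only if $\Tr(\vartheta_0(a))=\Tr(\vartheta_0(b))$, and then one just notes $\vartheta_0\in\Omega^+$ with $\Tr(\vartheta_0(0))=0$. Your approach bypasses transvections entirely by decomposing $\vartheta_a$ into an orthogonal sum of binary forms and invoking additivity of the Arf invariant; this is more elementary and self-contained, at the cost of importing the multiplicativity-of-type fact, which the paper never needs explicitly. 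For (ii), the paper counts solutions of $\vartheta_0(a)=\beta$ for each $\beta$ and then sums over the $\beta$ with $\Tr(\beta)=0$; your character-sum factorisation is quicker and avoids knowing those solution counts in advance. The paper's transvection machinery pays off elsewhere in the Appendix (e.g.\ in the proof of Theorem~\ref{th:symple-complements}), so it is not wasted effort there; but for Theorem~\ref{th:structure_sp_2m_q} in isolation your direct computation is arguably cleaner.
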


The case $q=2$ has some further important properties. 

\begin{thm} \label{th:action_sp_2m_2}
    $Sp(2m,2)$ acts $2$-transitively on $\Omega^+$ and on $\Omega^-$. $R^+$ (resp. $R^-$) acts transitively on $\Omega^-$ (resp. $\Omega^+$). 
\end{thm}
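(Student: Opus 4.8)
The plan is to read off everything from Theorem~\ref{th:structure_sp_2m_q}, adding two standard ingredients: Witt's extension theorem for quadratic forms, and the elementary fact that a transitive permutation group is $2$-transitive exactly when a point stabiliser acts transitively on the remaining points. By Theorem~\ref{th:structure_sp_2m_q}(i), $Sp(2m,2)$ is already transitive on each of $\Omega^+$ and $\Omega^-$, so it suffices to prove that $R^+$ is transitive on $\Omega^+\setminus\{\Theta\}$ and on $\Omega^-$, and likewise that $R^-$ is transitive on $\Omega^-\setminus\{\Theta\}$ and on $\Omega^+$; the $2$-transitivity statements then follow formally.

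First I would fix $\Theta=\vartheta_0\in\Omega^+$, which is harmless by transitivity; then $R^+$ is the orthogonal group $O^+(2m,2)$ of the hyperbolic form $\vartheta_0$. A one-line computation gives $\vartheta_a^A=\vartheta_{aA}$ for $A\in R^+$, so under the bijection $\vartheta_a\leftrightarrow a$ the $R^+$-action on $\Omega$ becomes the natural $O^+(2m,2)$-action on $V=\mathbb{F}_2^{2m}$, sending $\Theta$ to $0$ (this makes Theorem~\ref{th:structure_sp_2m_q}(iii) explicit). Since $q=2$ we have $\vartheta_a\in\Omega^+$ iff $\vartheta_0(a)=0$, so $\Omega^+$ corresponds to the singular vectors of $\vartheta_0$ and $\Omega^-$ to the non-singular ones; the values $|\Omega^\pm|$ of Theorem~\ref{th:structure_sp_2m_q}(ii) agree with the classical counts $2^{2m-1}\pm 2^{m-1}$, a reassuring check. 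The same device works for $\Omega^-$: take $\Theta=\vartheta_c$ with $\vartheta_0(c)=1$, an elliptic form, so that $R^-=O^-(2m,2)$; after the substitution $\vartheta_a\leftrightarrow a+c$ the action is again the natural one on $V$, sending $\Theta$ to $0$, with $\Omega^-$ now matching the singular vectors of $\vartheta_c$ and $\Omega^+$ the non-singular ones.

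Next I would apply Witt's extension theorem: over $\mathbb{F}_2$, every isometry between (non-defective) subspaces of $(V,\vartheta)$ extends to $O^\varepsilon(2m,2)$; applying it to $1$-dimensional subspaces shows $O^\varepsilon(2m,2)$ is transitive on the nonzero vectors with any prescribed value of $\vartheta$. Hence the orbits of $O^\varepsilon(2m,2)$ on $V$ are $\{0\}$, the nonzero singular vectors, and the non-singular vectors (the middle set being empty only in the degenerate case $\varepsilon=-1$, $m=1$). Pulling this back through the identifications above, $R^+$ is transitive on $\Omega^+\setminus\{\Theta\}$ and on $\Omega^-$, and $R^-$ is transitive on $\Omega^-\setminus\{\Theta\}$ and on $\Omega^+$; with Theorem~\ref{th:structure_sp_2m_q}(i) and the stabiliser criterion this gives the $2$-transitivity of $Sp(2m,2)$ on $\Omega^+$ and on $\Omega^-$, together with the stated transitivity of $R^\pm$ on $\Omega^\mp$. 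The lone exceptional case $m=1$ is harmless, since then $|\Omega^-|=1$ and $2$-transitivity on $\Omega^-$ is vacuous.

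I expect the only real obstacle to be bookkeeping: keeping straight how $\vartheta_a\leftrightarrow a$ (resp. $\vartheta_a\leftrightarrow a+c$) matches the $Sp(2m,2)$-splitting $\Omega=\Omega^+\sqcup\Omega^-$ with the orbit structure of $O^\varepsilon(2m,2)$ on $V$, and where $\Theta$ lands — for which the cardinality formulas in Theorem~\ref{th:structure_sp_2m_q}(ii) are exactly the tool. The appeal to Witt's theorem is the one non-formal input; it is safe here because $\vartheta_0$ and $\vartheta_c$ are non-degenerate quadratic forms on an even-dimensional space, the regime in which Witt's theorem survives in characteristic two.
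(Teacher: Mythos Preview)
Your proposal is correct and follows essentially the same route as the paper: reduce $2$-transitivity to the orbit structure of the point stabiliser $R^\pm$, identify $R^\pm$ with $O^\pm(2m,2)$ in its natural action on $V$ via Theorem~\ref{th:structure_sp_2m_q}(iii), and conclude from Witt's extension theorem (recorded in the paper as Lemma~\ref{lm:orth_orbit}) that $O^\pm(2m,2)$ has exactly the three orbits $\{0\}$, nonzero singular vectors, and non-singular vectors. The paper's proof is terser, simply citing the permutation equivalence and the three-orbit fact, whereas you spell out the bijections $\vartheta_a\leftrightarrow a$ and $\vartheta_a\leftrightarrow a+c$ and the edge case $m=1$ explicitly; this extra detail is correct and helpful but not a different argument.
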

        
The translations $\tau_d:u\mapsto u+d$, $d\in V=\mathbb{F}_2^{2m}$, form an elementary abelian group $N$, acting regularly on $V$. Any linear transformation $A$ normalizes $N$. The \textit{affine symplectic group} $ASp(2m,q)$ is the closure group of $N$ and $Sp(2m,q)$. 

\begin{thm} \label{th:symple-complements}
In the affine symplectic group $ASp(2m,q)$, the regular normal subgroup $N$ has two complements $H_1,H_2\cong Sp(2m,q)$ such that the orbits of $H_1$ have length $1$ and $q^{2m}-1$ and the orbits of $H_2$ have length $\frac{1}{2}(q^{2m}+q^m)$, $\frac{1}{2}(q^{2m}-q^m)$. The actions of $H_2$ on the orbits are equivalent to the actions of $Sp(2m,q)$ on $\Omega^+$ and $\Omega^-$. 
\end{thm}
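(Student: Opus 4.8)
The plan is to exhibit the two complements explicitly and to read off their orbit structures directly from Theorem~\ref{th:structure_sp_2m_q}. The first one is immediate: inside $ASp(2m,q)$ the subgroup $H_1=Sp(2m,q)$ of linear symplectic transformations is a complement to $N$ (a linear map that is also a translation is the identity, and $H_1N=ASp(2m,q)$ because $ASp(2m,q)=\langle N,Sp(2m,q)\rangle$ with $N$ normal), it is isomorphic to $Sp(2m,q)$, and it fixes the origin, so, since $Sp(2m,q)$ is transitive on nonzero vectors, its orbits on $V$ are $\{0\}$ and $V\setminus\{0\}$, of lengths $1$ and $q^{2m}-1$.

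For the second complement I would use the identification of $V$ with $\Omega$. Because $\langle .,.\rangle$ is non-degenerate, $\langle a,u\rangle^2=0$ for all $u$ forces $a=0$; hence $a\mapsto\vartheta_a$ from \eqref{eq:theta_a} is a bijection $V\to\Omega$. Transport the $Sp(2m,q)$-action $\Theta\mapsto\Theta^A$ on $\Omega$ to $V$ along this bijection. Since $A^{-1}$ is symplectic, $\vartheta_0^A$ again linearizes to $\langle .,.\rangle$, so there is a unique $c(A)\in V$ with $\vartheta_0^A=\vartheta_{c(A)}$; and since $A$ is symplectic, $\langle a,uA^{-1}\rangle=\langle aA,u\rangle$. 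Using these facts and the additivity of squaring in characteristic two,
\[\vartheta_a^A(u)=\vartheta_0(uA^{-1})+\langle a,uA^{-1}\rangle^2=\vartheta_{c(A)}(u)+\langle aA,u\rangle^2=\vartheta_{aA+c(A)}(u),\]
so the transported action of $A$ is the affine map $a\mapsto aA+c(A)$. From $\vartheta_0^{AB}=(\vartheta_0^A)^B$ one reads off the cocycle identity $c(AB)=c(A)B+c(B)$, whence $A\mapsto(a\mapsto aA+c(A))$ is a homomorphism $Sp(2m,q)\to ASp(2m,q)$; it is injective (its linear part recovers $A$) and its image $H_2$ meets $N$ trivially (a pure translation forces $A=I$, and then $c(I)=0$). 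Counting orders, $H_2$ is a complement to $N$ isomorphic to $Sp(2m,q)$.

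By construction the $H_2$-action on $V$ is equivalent to the $Sp(2m,q)$-action on $\Omega$, so by Theorem~\ref{th:structure_sp_2m_q}(i)--(ii) its orbits are $\Omega^+$ and $\Omega^-$, of lengths $\tfrac12(q^{2m}+q^m)$ and $\tfrac12(q^{2m}-q^m)$. Both lengths exceed $1$, so $H_2$ has no fixed vector whereas $H_1$ does; hence $H_1\neq H_2$, and in fact they are non-conjugate in $ASp(2m,q)$ (equivalently, $c$ is not a coboundary).

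The only step that needs care is the transported-action identity displayed above: keeping $A$ and $A^{-1}$ straight, invoking symplecticity at the right moment, and noting that $\vartheta_0^A$ still lies in $\Omega$, which is precisely what makes $c(A)$ well defined. I do not expect a genuine obstacle here — once the affine action $a\mapsto aA+c(A)$ is in hand, the theorem is essentially a restatement of Theorem~\ref{th:structure_sp_2m_q}.
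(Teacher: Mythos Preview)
Your proposal is correct and follows essentially the same route as the paper: both take $H_1=Sp(2m,q)$ as the linear complement, transport the $Sp(2m,q)$-action on $\Omega$ to $V$ via $a\mapsto\vartheta_a$, compute that the transported action of $A$ is the affine map $a\mapsto aA+c(A)$ with $\vartheta_0^A=\vartheta_{c(A)}$, and read off the orbit lengths from Theorem~\ref{th:structure_sp_2m_q}. If anything, you are slightly more careful than the paper---you verify the cocycle identity, check injectivity and trivial intersection with $N$ explicitly, and note that the differing orbit structures force $H_1$ and $H_2$ to be non-conjugate---whereas the paper simply asserts that the affine maps ``obviously'' form a group complementing $N$.
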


\section{Strongly regular graphs, Seidel switching, two-graphs}\label{sec:srg-two-graphs}

\subsection{Preliminaries on projective quadrics}

We give a short overview of the notions related to the geometry of quadrics in finite projective spaces; for more details, see \cite{Hirschfeld3}. We deal only with the case of characteristic $2$. 

Let $V(n+1,q)$ be a vector space of dimension $n+1$ over the finite field $F_q$. The \textit{projective space} $PG(n,q)$, or $PG(V)$, is the set of all $1$-dimensional subspaces of $V$, called \textit{projective points}. A non-zero vector $a\in V$ determines the projective point $P(a)=\langle a \rangle$. Let $W$ be an $(h+1)$-dimensional subspace of $V$. The $1$-dimensional subspaces of $W$ form a projective subspace $[W]=PG(h,q)$; we say that $[W]$ has projective dimension $h$. 

Let $\Theta:V\to \mathbb{F}_q$ be a quadratic form on $V$. Let $\V(\Theta)$ denote the set of projective points $P(a)$ such that $\Theta(a)=0$. This is a well-defined quadratic surface since $\Theta$ is homogeneous of degree $2$. The quadric $Q=\V(\Theta)$ is non-singular, if $\Theta$ is non-degenerate. We say that $Q$ is \textit{elliptic, parabolic} or \textit{hyperbolic,} if $\Theta$ is a non-degenerate quadratic form of the appropriate type. We also denote the elliptic, parabolic, and hyperbolic quadrics of $PG(n,q)$ by $Q^-(n,q)$, $Q(n,q)$, and $Q^+(n,q)$, respectively. 

Let $Q$ be a non-singular quadric in $PG(n,q)$ and $S$ a projective subspace of projective dimension $h>1$. $S\cap Q$ is a quadric in $S$. We say that $S$ is \textit{tangent} or \textit{secant} to $Q$ if $S\cap Q$ is singular or non-singular in $S$, respectively. A line $\ell$ is tangent to $Q$ if and only if $|\ell\cap Q|=1$ or $\ell \subseteq Q$. 

Let $Q=\V(\Theta)$ be a non-singular quadric in $PG(n,q)$, $n$ odd. Then the underlying linear space $V$ has even dimension $n+1$. The alternating form $\langle .,. \rangle$ associated with $\Theta$ is non-degenerate, and it determines a symplectic polarity of $V$. The polar $S^\perp$ of a projective subspace $S$ of projective dimension $h$ has dimension $n-h-1$. In particular, the polar of a point is a hyperplane; $P\in Q$ if and only if $P^\perp$ is a tangent hyperplane to $Q$. 

Let $Q=\V(\Theta)$ be a non-singular quadric in $PG(n,q)$, $n$ even. Then, $\langle .,. \rangle$ is singular with a $1$-dimensional kernel $\langle u \rangle$, such that $\Theta(u)\neq 0$. The projective point $N=P(u)$ is called the \textit{nucleus} of $Q$. The hyperplane is tangent to $Q$ if and only if it contains $N$. If the hyperplane $H$ does not contain $N$, then $H\cap Q$ is a non-singular quadric in $PG(n-1,q)$, of type $+1$ (hyperbolic) or of type $-1$ (elliptic).

\subsection{Strongly regular polar graphs}

In this subsection, we recall the definition of two infinite classes of strongly regular graphs, see \cite[Section 3.1]{brvan} for the proofs of the Lemmas \ref{lm:no-2m-2} and \ref{lm:no-2m+1-q} and further detail. 

Consider a non-singular quadric $Q^{\pm}(2m-1,2)$ in $PG(2m-1,2)$, of type $\pm1$. Then $|Q^{\pm}(2m-1,2)|=2^{2m-1}\pm2^{m-1}-1$, and $|PG(2m-1,2)\setminus Q^{\pm}(2m-1,2)|=2^{2m-1}\mp2^{m-1}$.
\begin{defn}
    The graph $NO^{\pm}(2m,2)$ is the graph whose vertex set is $PG(2m-1,2)\setminus Q^{\pm}(2m-1,2)$, and two vertices are adjacent if and only if the two corresponding points lie on a line tangent to the quadric $Q^{\pm}(2m-1,2)$.
\end{defn}
\begin{lem}\cite[Section 3.1.2]{brvan} \label{lm:no-2m-2}
 $NO^{\pm}(2m,2)$ is a strongly regular graph with parameters 
  \begin{align*}
 v &= 2^{2m-1}\mp2^{m-1}, \\
 k &= 2^{2m-2}-1, \\
 \lambda &= 2^{2m-3}-2, \\
 \mu &= 2^{2m-3}\pm2^{m-2}.
 \end{align*}
The automorphism group of $NO^{\pm}(2m,2)$ contains the orthogonal group $O^{\pm}(2m,2)$.
\end{lem}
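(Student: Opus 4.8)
The plan is to reduce everything to one elementary observation about lines over $\mathbb{F}_2$ and then to a short sequence of isotropic-vector counts for quadratic forms in characteristic two.

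First I would record that $v=(2^{2m}-1)-(2^{2m-1}\pm2^{m-1}-1)=2^{2m-1}\mp2^{m-1}$ directly from the point count of $Q^{\pm}(2m-1,2)$. The crucial step is then to note that a projective line of $PG(2m-1,2)$ through distinct points $P(a),P(b)$ is exactly $\{P(a),P(b),P(a+b)\}$, so if both $P(a)$ and $P(b)$ are non-isotropic (hence $\Theta(a)=\Theta(b)=1$) then $\Theta(a+b)=\Theta(a)+\Theta(b)+\langle a,b\rangle=\langle a,b\rangle$; thus the line $P(a)P(b)$ is tangent to $Q^{\pm}(2m-1,2)$ (its only possible intersection point being $P(a+b)$) precisely when $\langle a,b\rangle=0$. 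In other words, adjacency in $NO^{\pm}(2m,2)$ is orthogonality with respect to the polar symplectic form. The automorphism statement is then immediate: $O^{\pm}(2m,2)$ fixes $\Theta$, hence $Q^{\pm}(2m-1,2)$, hence the vertex set and the adjacency relation. I would also point out, via Witt's extension theorem, that $O^{\pm}(2m,2)$ is transitive on ordered pairs of non-isotropic vectors with a prescribed value of $\langle a,b\rangle$, which already yields strong regularity before any parameter is computed.

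To get $k$, I would fix non-isotropic $a$ and look at the hyperplane $a^{\perp}$. Since $\langle a,a\rangle=0$ we have $a\in a^{\perp}$, and since $\Theta(a)\neq 0$ the restriction $\Theta|_{a^{\perp}}$ is non-degenerate on the odd-dimensional space $a^{\perp}$, hence parabolic; a parabolic form in dimension $2m-1$ over $\mathbb{F}_2$ has $2^{2m-2}$ isotropic vectors, so $a^{\perp}$ carries $2^{2m-1}-2^{2m-2}=2^{2m-2}$ non-isotropic projective points, of which $P(a)$ is one, giving $k=2^{2m-2}-1$. For $\lambda$ and $\mu$ I would fix non-isotropic independent $a,b$, set $W=a^{\perp}\cap b^{\perp}$ (dimension $2m-2$), and count the non-isotropic projective points of $W$ other than $P(a),P(b)$. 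In the $\lambda$-case ($\langle a,b\rangle=0$) the plane $\langle a,b\rangle\subseteq W$ is the radical of $\langle .,.\rangle|_{W}$, so $W=\langle a,b\rangle\perp U$ with $U$ symplectic of dimension $2m-4$; writing $\Theta(xa+yb+u)=(x+y)+\Theta(u)$ and splitting on the parity of $x+y$ gives $2^{2m-3}$ non-isotropic vectors in $W$ regardless of the type of $\Theta|_U$, and removing $P(a),P(b)$ (the vector $a+b$ being isotropic) leaves $\lambda=2^{2m-3}-2$. In the $\mu$-case ($\langle a,b\rangle=1$) the plane $\langle a,b\rangle$ is hyperbolic for $\langle .,.\rangle$, so $V=\langle a,b\rangle\perp W$; on $\langle a,b\rangle$ the form $\Theta$ is the irreducible binary form $x^2+xy+y^2$, which is elliptic, so by multiplicativity of the type under orthogonal sums $\Theta|_W$ is non-degenerate of the type $\mp1$ opposite to $\Theta$, and such a form in dimension $2m-2$ over $\mathbb{F}_2$ has $2^{2m-3}\pm2^{m-2}$ non-isotropic vectors; since $\langle a,b\rangle\cap W=0$ nothing is removed, so $\mu=2^{2m-3}\pm2^{m-2}$.

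The main obstacle is not conceptual but the characteristic-two bookkeeping: making sure that $\Theta|_{a^{\perp}}$ is parabolic, that in the $\mu$-case $\Theta|_W$ genuinely has type opposite to that of $\Theta$, and that the isotropic-vector counts for non-degenerate binary and parabolic forms over $\mathbb{F}_2$ carry the correct signs. These facts are standard (and the tools are collected in the Appendix), but this is the place where a sign error would most easily slip in; everything else is routine.
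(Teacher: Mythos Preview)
Your argument is correct and complete. The parameter computations are all right: the parabolic restriction on $a^{\perp}$ gives $k$, the $\lambda$-count via the $2$-dimensional radical $\langle a,b\rangle$ and the identity $\Theta(xa+yb+u)=(x+y)+\Theta(u)$ is clean, and in the $\mu$-case the sign reversal of the type on $W$ (elliptic binary form on $\langle a,b\rangle$ forces $\Theta|_W$ to have type $\mp$) is handled correctly.

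However, there is nothing to compare against: the paper does \emph{not} prove this lemma. It is stated with the citation \cite[Section 3.1.2]{brvan} and used as a black box, so you have supplied a self-contained proof where the paper simply imports the result from Brouwer--Van Maldeghem. Your opening observation that adjacency in $NO^{\pm}(2m,2)$ coincides with $\langle a,b\rangle=0$ is exactly the content of Proposition~\ref{pr:anal1} later in the paper, and your use of Witt's extension theorem to get transitivity on ordered pairs (hence strong regularity before any counting) parallels Lemma~\ref{lm:orth_orbit} in the Appendix. So the ingredients you use are all present in the paper's toolbox; the paper just chose not to assemble them into a proof of this particular lemma.
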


Now consider a non-singular parabolic quadric $Q(2m,q)$ in $PG(2m,q)$, $q$ even. Let $\mathcal{H}^{\pm}$ be the set of hyperplanes not containing the nucleus $N$ and intersecting the quadric in a non-singular quadric of type $\pm1$.
\begin{defn}
    The graph $NO^{\pm}(2m+1,q)$ is the graph whose vertex set is $\mathcal{H}^{\pm}$, and two vertices $H_1$ and $H_2$ are adjacent if and only if $H_1\cap H_2\cap Q(2m,q)$ is degenerate.
\end{defn}
\begin{lem}\cite[Section 3.1.4]{brvan} \label{lm:no-2m+1-q}
 $NO^{\pm}(2m+1,q)$ is a strongly regular graph with parameters 
 \begin{align*}
 v &= \frac{1}{2}q^m(q^m\pm1),\\
 k &= (q^{m-1}\pm1)(q^m\mp1),\\
 \lambda &= 2(q^{2m-2}-1)\pm q^{m-1}(q-1),\\
 \mu &= 2q^{m-1}(q^{m-1}\pm1).    
 \end{align*} 
The automorphism group of $NO^{\pm}(2m+1,q)$ contains $O(2m+1,q)\cong Sp(2m,q)$.
\end{lem}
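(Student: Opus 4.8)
The statement is proved geometrically in \cite{brvan}; the route I would take is to transport Wilbrink's construction into the symplectic model of Theorem~\ref{th:structure_sp_2m_q} and then carry out the counting there.

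\emph{An algebraic model of the vertices.} Let $\Theta$ define $Q(2m,q)$ on $V=\mathbb{F}_q^{2m+1}$, with linearization $\langle .,. \rangle$ whose radical is the nucleus line $\langle n\rangle$; recall $\Theta(n)\ne 0$. A hyperplane $H$ not through $N$ is the kernel of a unique functional $\phi$ with $\phi(n)=1$; since $n\notin H$, the quotient map restricts to an isomorphism $H\to V/\langle n\rangle$, so $\langle .,. \rangle|_H$ is non-degenerate and $\Theta|_H$ is a non-degenerate quadratic form on the $2m$-dimensional space $H$. Thus $H$ lies in $\mathcal H^{+}$ or $\mathcal H^{-}$ according to the type of $\Theta|_H$, and pushing $\Theta|_H$ through $H\cong V/\langle n\rangle$ identifies the $q^{2m}$ hyperplanes off $N$ with the set $\Omega$ of Theorem~\ref{th:structure_sp_2m_q} (the transported forms run over the coset $\vartheta_0+\{\langle a,\cdot\rangle^2\}$ because $\Theta(n)\ne 0$ and squaring is bijective), matching $\mathcal H^{\pm}$ with $\Omega^{\pm}$. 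Hence $v=|\mathcal H^{\pm}|=|\Omega^{\pm}|=\tfrac12(q^{2m}\pm q^{m})=\tfrac12 q^{m}(q^{m}\pm 1)$. Moreover $O(2m+1,q)$ fixes $\Theta$, hence $Q$ and $N$, so it permutes the hyperplanes off $N$, preserves the type of $H\cap Q$, and preserves degeneracy of projective sections; thus $O(2m+1,q)\le\Aut\bigl(NO^{\pm}(2m+1,q)\bigr)$, and under $O(2m+1,q)\cong Sp(2m,q)$ this group acts by Theorem~\ref{th:structure_sp_2m_q}(i),(iii) transitively on each of $\mathcal H^{+}$, $\mathcal H^{-}$. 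In particular the graph is vertex-transitive, so $k$ is well defined.

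\emph{Reformulating adjacency.} For $H_1,H_2\in\mathcal H^{\pm}$ put $U=H_1\cap H_2$, of dimension $2m-1$ with $n\notin U$. The form $\langle .,. \rangle|_U$ is alternating on an odd-dimensional space; a dimension count (its radical is the preimage in $U$ of the one-dimensional radical of the induced symplectic form on the image of $U$ in $V/\langle n\rangle$) shows this radical is a single line $\langle r\rangle\subseteq U$. Consequently $\Theta|_U$ is parabolic precisely when $\Theta(r)\ne 0$, and $U\cap Q$ is degenerate precisely when $\Theta(r)=0$, i.e.\ when $\langle r\rangle\in Q$. So \emph{$H_1\sim H_2$ if and only if the radical line of $\langle .,. \rangle|_{H_1\cap H_2}$ is a point of $Q$}, with $\langle r\rangle$ then the vertex of the cone $U\cap Q$ over a non-degenerate $Q^{\pm}(2m-3,q)$.

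\emph{The parameters.} With this dictionary, $k,\lambda,\mu$ become counting problems over $\mathbb F_q$. For $k$: fix $H_1\in\mathcal H^{\pm}$ and enumerate the neighbours $H_2$ by first choosing the radical point $\langle r\rangle\in Q$, then taking $U$ to be a hyperplane of $H_1$ inside $r^{\perp}$ on which $\langle .,. \rangle$ has radical exactly $\langle r\rangle$, then choosing $H_2$ among the $q-1$ hyperplanes through $U$ distinct from $H_1$ and missing $N$; summing over $r$ with the point-counts for $Q(2m,q)$ and for $H_1\cap Q=Q^{\pm}(2m-1,q)$ yields $k=(q^{m-1}\pm 1)(q^{m}\mp 1)$. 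For $\lambda$ and $\mu$ one fixes an adjacent, respectively non-adjacent, pair $(H_1,H_2)$ and counts $H_3\in\mathcal H^{\pm}$ meeting both $H_1$ and $H_2$ in a degenerate section, organizing the count by the pair of radical points $(\langle r_{13}\rangle,\langle r_{23}\rangle)\in Q\times Q$. The one genuinely delicate step — the main obstacle — is exactly this last count: one must show the number of common neighbours depends only on whether $H_1\sim H_2$, which is the same as showing $Sp(2m,q)$ acts as a \emph{rank-three} group on each $\mathcal H^{\pm}$ and analysing its two non-trivial suborbits (the orbit analysis that also produces the $\lambda,\mu$ values). For $q=2$ this is vacuous, since then $k=v-1$ and the graph is complete, consistent with the $2$-transitivity of Theorem~\ref{th:action_sp_2m_2}; for general $q=2^{h}$ the finer suborbit structure of $Sp(2m,q)$ on $\Omega^{\pm}$ must be worked out, and that is where the bulk of the effort lies.
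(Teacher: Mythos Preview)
The paper does not prove this lemma: it is stated with a citation to \cite[Section~3.1.4]{brvan}, and the surrounding text explicitly says ``see \cite[Section~3.1]{brvan} for the proofs of the Lemmas \ref{lm:no-2m-2} and \ref{lm:no-2m+1-q}''. So there is no in-paper argument to compare against; your proposal goes well beyond what the authors supply. Your identification of hyperplanes off $N$ with the set $\Omega$ is essentially the paper's Lemma~\ref{lm:anal2-prep}, and your adjacency criterion via the radical of $\langle .,. \rangle|_{H_1\cap H_2}$ matches Lemma~\ref{lm:tangent}. The computation of $v$ and the containment $O(2m+1,q)\le \Aut$ are fine, and your $k$-count is correct once one observes (as in the paper's Lemma~4.2) that any hyperplane through a degenerate section $U$ and off $N$ automatically has the same type as $H_1$, so all $q-1$ candidates contribute.

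That said, your proposal is not a complete proof, and you acknowledge this yourself: you do not establish that $Sp(2m,q)$ acts with rank three on $\mathcal H^{\pm}$, and you do not carry out the $\lambda,\mu$ counts. Saying ``that is where the bulk of the effort lies'' is an honest assessment, but it leaves the strongly regular part unproved. The rank-three property is genuinely non-trivial for $q>2$: it amounts to showing that the stabilizer $O^{\pm}(2m,q)$ of a vertex has exactly two orbits on the remaining vertices of the same type, and this does not follow from Theorem~\ref{th:structure_sp_2m_q} or from Witt's theorem alone. If you want a self-contained argument, either complete the double-counting for $\lambda,\mu$ directly (fixing $U=H_1\cap H_2$ and enumerating $H_3$ by the positions of the two radical points $r_{13},r_{23}$ relative to the cone $U\cap Q$), or cite the rank-three classification for $Sp(2m,q)$ on $\Omega^{\pm}$; otherwise your write-up, like the paper's, ultimately rests on \cite{brvan}.
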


\subsection{Two-graphs and switchings}
We start this subsection by defining the concepts of \textit{two-graph}, and of two-graph associated with a given graph.
\begin{defn}[Two-graph]
 A two-graph is a pair $(X, T)$, where $T$ is a set of unordered triples of a vertex set $X$, such that every (unordered) quadruple from $X$ contains an even number of triples from $T$. In a \textit{regular two-graph}, each pair of vertices is in a constant number of triples.
\end{defn}
\begin{defn}[Associated two-graph]
 Given a graph $G = (V,E)$, the set of triples $T$ of the vertex set $V$ whose induced subgraph has an odd number of edges forms a two-graph on the set $V$. The two-graph $Tau(G)=(V,T)$ is the \textit{associated two-graph} of $G$.
\end{defn}

Let $\mathcal T=(V,T)$ be a two-graph. The \textit{descendant} of $\mathcal T$ w.r.t. the vertex $w\in V$ is the graph $\mathcal T_w=(V,E)$ where $E$ consists of pairs $(u,v)$ such that $\{u,v,w\}\in T$. By definition, $w$ is an isolated vertex of the descendant. It is not hard to see that $Tau(\mathcal T_w)=\mathcal T$, which shows that every two-graph can be represented as an associated two-graph of a graph.

\begin{defn}[Seidel switching \cite{seidel}]
Given a graph $G=(V,E)$ and a subset $Y$ of the vertex set, the operation of switching $G$ with respect to $Y$ consists of replacing all edges from $Y$ to its complement by nonedges, and all nonedges by edges, while leaving the edges within $Y$ or outside $Y$ unchanged.
\end{defn}
Switching defines an equivalence relation on the set of all graphs with a given $n$-element vertex set, each equivalence class containing $2^{n-1}$ graphs (since switching with respect to a set and its complement are the same thing, and switching with respect to two sets is the same as switching with respect to their symmetric difference). The equivalence classes are called \textit{switching classes}.

The following proposition is well-known; we give the proof for the sake of completeness.

\begin{prop}\cite{seidel} \label{pr:switching-equiv-same}
Two graphs are switching equivalent if and only if they have the same associated two-graph. 
\end{prop}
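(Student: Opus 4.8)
The plan is to prove both directions by a direct computation with the Seidel adjacency matrix. Recall that for a graph $G=(V,E)$ on $n$ vertices, the Seidel matrix $S(G)$ is the symmetric $n\times n$ matrix with zero diagonal, $-1$ in position $(u,v)$ when $uv\in E$, and $+1$ when $uv\notin E$. The first step is the elementary observation that switching $G$ with respect to a subset $Y\subseteq V$ replaces $S(G)$ by $D\,S(G)\,D$, where $D$ is the diagonal $\pm1$ matrix with $D_{vv}=-1$ for $v\in Y$ and $D_{vv}=+1$ otherwise. This is immediate from the definition of switching: an entry $S(G)_{uv}$ changes sign precisely when exactly one of $u,v$ lies in $Y$, which is exactly the effect of conjugating by $D$.

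The second step is to show that the associated two-graph $\mathrm{Tau}(G)$ is determined by, and determines, the Seidel matrix up to such a conjugation. For a triple $\{u,v,w\}$, the induced subgraph has an odd number of edges if and only if the product $S(G)_{uv}\,S(G)_{vw}\,S(G)_{uw}$ equals $-1$ (each edge contributes a factor $-1$, each non-edge a factor $+1$, so the product is $-1$ iff the number of edges is odd). Hence $\{u,v,w\}\in T$ is equivalent to this sign condition on off-diagonal entries of $S(G)$. I would then note that two $\pm1$ symmetric zero-diagonal matrices $S$ and $S'$ satisfy $S'=DSD$ for some diagonal $\pm1$ matrix $D$ if and only if $S_{uv}S_{vw}S_{uw}=S'_{uv}S'_{vw}S'_{uw}$ for all triples $\{u,v,w\}$: one direction is a one-line check since each diagonal factor $D_{vv}$ appears twice in the triple product; for the converse, fix a base vertex $v_0$ and define $D_{vv}=S_{v_0 v}S'_{v_0 v}$ for $v\neq v_0$ and $D_{v_0v_0}=1$, then verify $S'_{uv}=D_{uu}S_{uv}D_{vv}$ using the triple identity applied to $\{v_0,u,v\}$.

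Combining the two steps finishes the proof. If $G$ and $G'$ are switching equivalent, then $S(G')=DS(G)D$ for some $D$, hence the triple products agree, hence $\mathrm{Tau}(G)=\mathrm{Tau}(G')$. Conversely, if $\mathrm{Tau}(G)=\mathrm{Tau}(G')$, then all triple products of $S(G)$ and $S(G')$ agree, so by the matrix characterization $S(G')=DS(G)D$ for some diagonal $\pm1$ matrix $D$; taking $Y=\{v:D_{vv}=-1\}$ shows $G'$ is obtained from $G$ by switching with respect to $Y$. The only mild subtlety — really the single place one must be careful rather than a genuine obstacle — is the converse of the matrix characterization: one has to check that the locally defined $D$ is globally consistent, which is exactly where the even-quadruple (equivalently, cocycle) condition is used implicitly, though here it is automatic since we are comparing two honest Seidel matrices whose triple products are genuine graph invariants. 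I expect the whole argument to be short and entirely formal.
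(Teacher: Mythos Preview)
Your argument is correct, but it takes a different route from the paper's. The paper proves the forward direction by a bare-hands case check on how many vertices of a triple lie in $Y$, and for the converse it forms the symmetric difference $F=E_1\triangle E_2$, observes that the graph $(V,F)$ has empty associated two-graph, deduces that non-adjacency in $(V,F)$ is an equivalence relation with at most two classes, and takes $Y$ to be one of those classes. Your proof instead packages everything into the Seidel matrix: switching becomes conjugation $S\mapsto DSD$, the two-graph becomes the collection of triple products $S_{uv}S_{vw}S_{uw}$, and the converse is handled by a base-point construction of $D$. The two arguments are of course closely related underneath --- your diagonal $D$ is precisely the $\pm1$ indicator of the paper's equivalence class containing $v_0$ --- but your version is more algebraic and makes the cocycle structure visible, while the paper's version is purely combinatorial and avoids introducing any matrix formalism. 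Both are short; yours generalizes more readily (e.g.\ to signed graphs or to the cohomological viewpoint), while the paper's stays closer to the graph-theoretic language used elsewhere in the article.
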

\begin{proof}
Consider a subset $Y\subseteq V$ as above. W.l.o.g. we say that each triple $t=\{u,v,z\}$ either has all vertices in $Y$, or two vertices in $Y$ and one in $V\setminus Y$. In the former case, the adjacencies in $t$ are fixed since the adjacencies in a component $G_i$ are not involved in the switching. In the latter case, we easily check that the switching preserves the parity of the number of edges between $u$, $v$, and $z$ (we have 1, 2, or 0 edges through $Y$ and $V\setminus Y$, and the Seidel switching deletes the edge and adds another one, deletes both edges, or adds two edges, respectively). 

Conversely, assume that $G_1=(V,E_1)$ and $G_2=(V,E_2)$ have the same associated two-graph $\mathcal T$. Let $F$ be the symmetric difference of $E_1$ and $E_2$. It is straightforward to check that the graph $H=(V,F)$ has the associated two-graph empty. This implies that the non-adjacency relation is an equivalence relation on $V$, with at most $2$ equivalence classes. Choosing $Y$ to be one of the equivalence classes, we find that $G_2$ is the switching of $G_1$ w.r.t. $Y$. 
\end{proof}

\section{Analytic description of strongly regular polar graphs} \label{sec:analytic}

Throughout this section, notation will follow Sections \ref{sec:prelim} and \ref{sec:srg-two-graphs}.

\begin{prop}[Analytic description of $NO^\pm(2m,2)$] \label{pr:anal1}
Let $\Theta$ be a non-degener\-ate quadratic form on $\mathbb{F}_2^{2m}$ of type $\pm1$. Let $\langle .,. \rangle$ be the symplectic form associated to $\Theta$. Define the graph $(V,E)$ with 
\begin{align*}
    V&=\{a\in \mathbb{F}_2^{2m} \mid \Theta(a)=1\}, \\
    E&=\{(a,b)\in V\times V \mid \langle a,b \rangle =0 \text{ and } a\neq b\}.
\end{align*} 
Then $(V,E)\cong NO^{\pm}(2m,2)$.
\end{prop}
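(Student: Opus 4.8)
The goal is to identify the combinatorially-defined graph $NO^\pm(2m,2)$ on the non-isotropic points of a quadric $Q^\pm(2m-1,2)$ with the analytically-defined graph $(V,E)$ on vectors $a$ with $\Theta(a)=1$. The first step is to set up the dictionary between projective points of $PG(2m-1,2)$ and nonzero vectors of $\mathbb{F}_2^{2m}$: over the binary field, a projective point $P(a)=\langle a\rangle$ is in bijection with the unique nonzero vector $a$ it contains, so $PG(2m-1,2)\setminus Q^\pm(2m-1,2)$ corresponds exactly to $\{a\in\mathbb{F}_2^{2m}\mid\Theta(a)\neq 0\}=\{a\mid\Theta(a)=1\}=V$. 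This makes the vertex-set identification immediate and also matches the counts $|V|=2^{2m-1}\mp 2^{m-1}$ from Lemma \ref{lm:no-2m-2}.

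The heart of the proof is the edge condition: I must show that the line through two distinct non-isotropic points $P(a),P(b)$ is tangent to $Q^\pm(2m-1,2)$ if and only if $\langle a,b\rangle=0$. The line $\ell=\langle a,b\rangle$ (as a projective line, it has exactly three points $P(a)$, $P(b)$, $P(a+b)$ over $\mathbb{F}_2$) is tangent precisely when $|\ell\cap Q|\le 1$, i.e.\ when it contains at most one isotropic point. Since $\Theta(a)=\Theta(b)=1$, the only candidate isotropic point on $\ell$ is $P(a+b)$, and $\Theta(a+b)=\Theta(a)+\Theta(b)+\langle a,b\rangle = 1+1+\langle a,b\rangle = \langle a,b\rangle$. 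Hence $P(a+b)\in Q$ iff $\langle a,b\rangle=0$, i.e.\ the line is \emph{secant} (meets $Q$ in exactly one point) iff $\langle a,b\rangle=0$...

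Here I need to be careful about the convention: rereading the definition, $NO^\pm(2m,2)$ has adjacency when the two points lie on a \emph{tangent} line, and a line with $|\ell\cap Q|=1$ is tangent by the stated criterion; a line disjoint from $Q$ with $|\ell\cap Q|=0$ is secant (its three points carry a nonsingular conic in $PG(1,2)$, namely two points — wait, that is singular). Over $\mathbb{F}_2$ a projective line has $3$ points, so $|\ell\cap Q|\in\{0,1,2,3\}$; $\ell\subseteq Q$ gives $3$, and $\ell$ tangent means $|\ell\cap Q|=1$ or $\ell\subseteq Q$. With $\Theta(a)=\Theta(b)=1$ we cannot have $\ell\subseteq Q$ and cannot have $|\ell\cap Q|=2$; so the only possibilities are $|\ell\cap Q|=0$ (when $\langle a,b\rangle=1$) or $|\ell\cap Q|=1$ (when $\langle a,b\rangle=0$). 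Thus $\ell$ is tangent exactly when $\langle a,b\rangle=0$, which is precisely the condition defining $E$. I should state this parity/counting argument cleanly, since the main (small) obstacle is getting the tangent-versus-secant bookkeeping right over $\mathbb{F}_2$ and making sure the edge relation is symmetric and irreflexive, which it visibly is. Finally, I would remark that the map $a\mapsto P(a)$ is a graph isomorphism because it is a bijection $V\to PG(2m-1,2)\setminus Q$ preserving adjacency in both directions, and note for consistency that it is $O^\pm(2m,2)$-equivariant, which also recovers the automorphism-group statement of Lemma \ref{lm:no-2m-2}.
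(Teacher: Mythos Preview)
Your proposal is correct and follows essentially the same argument as the paper: identify non-isotropic projective points with vectors $a$ satisfying $\Theta(a)=1$, observe that the projective line through $P(a)$ and $P(b)$ has third point $P(a+b)$, and reduce tangency to $\Theta(a+b)=\Theta(a)+\Theta(b)+\langle a,b\rangle=\langle a,b\rangle=0$. Your momentary tangent/secant terminology slip self-corrects; the paper simply writes the chain $a\sim b\Leftrightarrow P(a+b)\in Q\Leftrightarrow\Theta(a+b)=0\Leftrightarrow\langle a,b\rangle=0$ without the detour.
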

\begin{proof}
    Let $Q^{\pm}(2m-1,2)=\V(\Theta)$ correspond to the quadratic form $\Theta$. Projective points $PG(2m-1,2)\setminus Q^\pm(2m-1,2)$ have coordinate vectors $a\in \mathbb{F}_2^{2m}$ with $\Theta(a)=1$. Hence, the vertices of $NO^{\pm}(2m,2)$ are in $1$-to-$1$ correspondence with our set $V$. For $a,b\in V$, the projective line $P(a)P(b)$ is consists of the three points $\{P(a),P(b),P(a+b)\}$ and 
    \begin{align*}
        a\sim b &\Leftrightarrow P(a+b)\in Q^{\pm}(2m-1,2) \\
        &\Leftrightarrow\Theta(a+b)=0 \\
        & \Leftrightarrow\Theta(a)+\Theta(b)+\langle a,b\rangle=0 \\
        & \Leftrightarrow\langle a,b\rangle=0. \qedhere
    \end{align*}
\end{proof}

Using quadratic forms, we aim to give an equivalent (analytic) description of $NO^\pm(2m+1,q)$ using quadratic forms. We say that two non-singular quadrics $Q_1,Q_2$ of $PG(2m-1,q)$ are \textit{tangent} if there is a hyperplane $H$ such that $H$ is tangent to both $Q_1,Q_2$, and $Q_1\cap H = Q_2 \cap H$. For the non-singular quadrics $Q_1=\V(\vartheta_a)$, $Q_2=\V(\vartheta_b)$, being tangent has a simple analytic characterization.
\begin{lem} \label{lm:tangent}
    For $\vartheta_a,\vartheta_b \in \Omega$, the following are equivalent:
    \begin{enumerate}[(i)]
        \item The non-singular quadrics $\V(\vartheta_a)$, $\V(\vartheta_b)$ are tangent.
        \item $\V(\vartheta_a) \cap \V(\vartheta_b)$ is a degenerate quadric.
        \item $P(a+b) \in \V(\vartheta_a) \cap \V(\vartheta_b)$.
        \item $\vartheta_a(a+b)=0$.
    \end{enumerate}
\end{lem}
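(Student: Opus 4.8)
The plan is to prove the chain of equivalences by first establishing the easy analytic ones and then connecting them to the geometric notion of tangency. I would start with the equivalence (iii)$\Leftrightarrow$(iv): since $\vartheta_a$ and $\vartheta_b$ both linearize to $\langle.,.\rangle$, we have $\vartheta_a(a+b)=\vartheta_a(a)+\vartheta_a(b)+\langle a,b\rangle$ and $\vartheta_b(a+b)=\vartheta_b(a)+\vartheta_b(b)+\langle a,b\rangle$. Using \eqref{eq:theta_a}, $\vartheta_a(a)=\vartheta_0(a)+\langle a,a\rangle^2=\vartheta_0(a)$ (the form is alternating), and $\vartheta_a(b)=\vartheta_0(b)+\langle a,b\rangle^2=\vartheta_0(b)+\langle a,b\rangle$ since we are over $\mathbb{F}_2$; a symmetric computation handles $\vartheta_b$. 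A short reckoning then shows $\vartheta_a(a+b)=\vartheta_b(a+b)=\vartheta_0(a)+\vartheta_0(b)$, so in particular $\vartheta_a(a+b)=0$ iff $\vartheta_b(a+b)=0$ iff $P(a+b)\in\V(\vartheta_a)\cap\V(\vartheta_b)$. This simultaneously gives (iii)$\Leftrightarrow$(iv) and the symmetry of the condition in $a$ and $b$.

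Next I would prove (ii)$\Leftrightarrow$(iii). The intersection $\V(\vartheta_a)\cap\V(\vartheta_b)$ is cut out by the pencil $\{\lambda\vartheta_a+\mu\vartheta_b\}$; over $\mathbb{F}_2$ the relevant combinations are $\vartheta_a$, $\vartheta_b$, and $\vartheta_a+\vartheta_b$. Since $\vartheta_a+\vartheta_b=\langle a,u\rangle^2+\langle b,u\rangle^2=\langle a+b,u\rangle^2$ as a function of $u$, the locus $\vartheta_a(u)=\vartheta_b(u)$ is the hyperplane $\langle a+b,u\rangle=0$, i.e.\ $(a+b)^\perp$. Thus $\V(\vartheta_a)\cap\V(\vartheta_b)=\V(\vartheta_a)\cap (a+b)^\perp$, a quadric inside the hyperplane $(a+b)^\perp$. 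This quadric is degenerate precisely when the radical of $\vartheta_a|_{(a+b)^\perp}$ is nontrivial. The key point is that the radical of the restriction of a non-degenerate form to a hyperplane $H=c^\perp$ is spanned by the unique vector in $H$ paired trivially with all of $H$ under $\langle.,.\rangle$ — and since $\langle.,.\rangle$ is non-degenerate, that vector is $c$ itself (as $c^{\perp\perp}=\langle c\rangle$), provided $c\in H$, which holds here because $\langle a+b,a+b\rangle=0$. So the restriction is degenerate iff $\vartheta_a(a+b)=0$, i.e.\ iff $P(a+b)\in\V(\vartheta_a)$, which by the previous paragraph is the same as (iii). (One should also check $\V(\vartheta_a)\cap\V(\vartheta_b)$ is a proper quadric, not everything, which follows from $a\ne b$ and non-degeneracy.)

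Finally, (i)$\Leftrightarrow$(ii): if the two quadrics are tangent, there is a hyperplane $H$ tangent to both with $\V(\vartheta_a)\cap H=\V(\vartheta_b)\cap H$; but then $\V(\vartheta_a)\cap\V(\vartheta_b)\supseteq \V(\vartheta_a)\cap H$, and conversely any point of the intersection lies on $H$ once we know $H$ separates them appropriately — more cleanly, from the analysis above the common intersection is always $\V(\vartheta_a)\cap(a+b)^\perp$, so $H=(a+b)^\perp$ is the \emph{only} candidate hyperplane; it is tangent to $\V(\vartheta_a)$ iff $\V(\vartheta_a)\cap H$ is singular iff (ii). So (i) and (ii) both reduce to degeneracy of $\V(\vartheta_a)\cap(a+b)^\perp$. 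I expect the main obstacle to be the bookkeeping in the (ii)$\Leftrightarrow$(iii) step: one must correctly identify the radical of the restricted quadratic form and argue that its degeneracy is controlled by the single scalar $\vartheta_a(a+b)$, being careful about the distinction between the radical of the bilinear form $\langle.,.\rangle|_H$ and the (possibly larger) radical of the quadratic form $\vartheta_a|_H$ in characteristic two — in odd projective dimension with $\langle.,.\rangle$ symplectic these coincide, which is the fact that makes the argument go through.
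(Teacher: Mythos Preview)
Your approach is essentially the paper's: both hinge on the identity $\vartheta_a-\vartheta_b=\langle a+b,\cdot\rangle^2$, giving $\V(\vartheta_a)\cap\V(\vartheta_b)=\V(\vartheta_a)\cap(a+b)^\perp$; both then reduce (ii) to whether the hyperplane $(a+b)^\perp$ is tangent to $\V(\vartheta_a)$, i.e.\ whether its pole $P(a+b)$ lies on the quadric (you phrase this via the radical of $\vartheta_a|_{(a+b)^\perp}$, the paper via the polarity criterion $P\in Q\Leftrightarrow P^\perp$ tangent); and both dispatch (i) by arguing that any common tangent hyperplane with equal sections must be $(a+b)^\perp$.

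One genuine slip to fix: the lemma is stated and used over $\mathbb{F}_q$ with $q=2^h$ (it is applied later with $q=4$), not just over $\mathbb{F}_2$. Your step ``$\langle a,b\rangle^2=\langle a,b\rangle$ since we are over $\mathbb{F}_2$'' is therefore illegitimate, and the value you obtain, $\vartheta_a(a+b)=\vartheta_0(a)+\vartheta_0(b)$, is wrong for $q>2$; the correct expression is $\vartheta_0(a)+\vartheta_0(b)+\langle a,b\rangle+\langle a,b\rangle^2$. Fortunately this is symmetric in $a,b$, so the conclusion $\vartheta_a(a+b)=\vartheta_b(a+b)$ survives (more directly: $\vartheta_b(u)=\vartheta_a(u)+\langle a+b,u\rangle^2$ evaluated at $u=a+b$). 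Similarly, your remark that ``over $\mathbb{F}_2$ the relevant combinations are $\vartheta_a,\vartheta_b,\vartheta_a+\vartheta_b$'' is unnecessary: the equivalence $\vartheta_a(u)=\vartheta_b(u)\Leftrightarrow\langle a+b,u\rangle=0$ holds for every $q$. Once you drop the $q=2$ assumption, your argument goes through and matches the paper's.
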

\begin{proof}
    By the definition \eqref{eq:theta_a}, we have
    \begin{align} \label{eq:tangent-quadrics}
        \V(\vartheta_a) \cap \V(\vartheta_b) = \V(\vartheta_a) \cap H = \V(\vartheta_b) \cap H,
    \end{align}
    where $H=(a+b)^\perp$ is the hyperplane polar to $a+b$. Hence, $\V(\vartheta_a) \cap \V(\vartheta_b)$ is degenerate if and only if $H$ is tangent to both $\V(\vartheta_a)$ and $\V(\vartheta_b)$. This holds if and only if $H^\perp=P(a+b)$ is a point of both of them. This shows (ii)$\Leftrightarrow$(iii). The equivalence (iii)$\Leftrightarrow$(iv) is obvious. (ii), (iii) and equation \eqref{eq:tangent-quadrics} imply (i). 
    
    Assume (i): Let $K$ be a hyperplane which is tangent to both $\V(\vartheta_a)$, $\V(\vartheta_b)$, and $\V(\vartheta_a)\cap K = \V(\vartheta_b) \cap K$. Then $\V(\vartheta_a) \cap \V(\vartheta_b)\subseteq K$. However, $\V(\vartheta_a) \cap \V(\vartheta_b)$ spans $H$ by \eqref{eq:tangent-quadrics}, therefore $H=K$. As $H$ is tangent to $\V(\vartheta_a)$, $\V(\vartheta_b)$, we have $P(a+b)=H^\perp \in \V(\vartheta_a) \cap \V(\vartheta_b)$, and (iii) holds. 
\end{proof}

\begin{lem}
    \begin{enumerate}[(i)]
        \item If $\V(\vartheta_a)$ and $\V(\vartheta_b)$ are tangent, then $\vartheta_a,\vartheta_b$ have the same type.
        \item If $q=2$ and $\vartheta_a$ and $\vartheta_b$ have the same type, then $\V(\vartheta_a)$ and $\V(\vartheta_b)$ are tangent.
    \end{enumerate}
\end{lem}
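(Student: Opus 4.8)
The plan is to reduce both parts to the analytic tangency criterion of Lemma~\ref{lm:tangent}, specifically the equivalence (i)$\Leftrightarrow$(iv): the non-singular quadrics $\V(\vartheta_a)$ and $\V(\vartheta_b)$ are tangent if and only if $\vartheta_a(a+b)=0$. (Both quadrics are automatically non-singular, since every form in $\Omega$ is non-degenerate, so the word ``tangent'' is meaningful.) The first step is to put $\vartheta_a(a+b)$ into closed form. Using that $\vartheta_0$ linearizes to $\langle .,.\rangle$ and that $\langle .,.\rangle$ is alternating, so $\langle a,a+b\rangle=\langle a,b\rangle$, one gets
\[
\vartheta_a(a+b)=\vartheta_0(a+b)+\langle a,a+b\rangle^{2}=\vartheta_0(a)+\vartheta_0(b)+\langle a,b\rangle+\langle a,b\rangle^{2}.
\]
Writing $t=\langle a,b\rangle\in\mathbb{F}_q$, tangency of the two quadrics is thus equivalent to $\vartheta_0(a)+\vartheta_0(b)=t+t^{2}$.

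For part (i), I would apply the absolute trace $\Tr=\Tr_{\mathbb{F}_q/\mathbb{F}_2}$ to this identity. Since the Frobenius $x\mapsto x^{2}$ merely permutes the summands of the trace, $\Tr(t^{2})=\Tr(t)$, hence $\Tr(t+t^{2})=0$; therefore $\Tr(\vartheta_0(a))=\Tr(\vartheta_0(b))$. By Theorem~\ref{th:structure_sp_2m_q}(i) this is exactly the condition that $\vartheta_a$ and $\vartheta_b$ belong to the same orbit $\Omega^{+}$ or $\Omega^{-}$, i.e.\ have the same type.

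For part (ii), set $q=2$. Then $\Tr$ is the identity, so ``$\vartheta_a$ and $\vartheta_b$ have the same type'' means $\vartheta_0(a)=\vartheta_0(b)$, giving $\vartheta_0(a)+\vartheta_0(b)=0$; and since now $t=\langle a,b\rangle\in\mathbb{F}_2$ we have $t^{2}=t$, so $t+t^{2}=0$ as well. The closed form above then yields $\vartheta_a(a+b)=0$, and Lemma~\ref{lm:tangent} gives tangency of $\V(\vartheta_a)$ and $\V(\vartheta_b)$.

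I do not anticipate a real obstacle: the only delicate points are the algebra in the expansion of $\vartheta_a(a+b)$---in particular using that $\langle .,.\rangle$ is alternating, not merely bilinear---and the identity $\Tr(t^{2})=\Tr(t)$. It is worth noting that the converse in (ii) genuinely needs $q=2$: it is precisely the step $t^{2}=t$ (equivalently, injectivity of $\Tr$) that breaks down for larger $q$, where ``same type'' is strictly weaker than tangency.
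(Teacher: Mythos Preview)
Your proof is correct and essentially identical to the paper's: both reduce to Lemma~\ref{lm:tangent}(iv), expand $\vartheta_a(a+b)=\vartheta_0(a)+\vartheta_0(b)+\langle a,b\rangle+\langle a,b\rangle^{2}$, apply $\Tr_{\mathbb{F}_q/\mathbb{F}_2}$ together with Theorem~\ref{th:structure_sp_2m_q}(i) for part (i), and note that the one-way implication becomes an equivalence when $q=2$ for part (ii). The only cosmetic difference is that the paper expands $\vartheta_a(a+b)$ via the linearization identity $\vartheta_a(a+b)=\vartheta_a(a)+\vartheta_a(b)+\langle a,b\rangle$ rather than via $\vartheta_0(a+b)+\langle a,a+b\rangle^2$, but the resulting formula and the rest of the argument are the same.
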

\begin{proof} 
    Since all $\vartheta_a$ linearize to $\langle .,. \rangle$, by definition we have
    \begin{align*}
        \vartheta_a(a+b)&= \vartheta_a(a)+\vartheta_a(b)+\langle a,b\rangle\\
        &=\vartheta_0(a)+\vartheta_0(b)+\langle a,b\rangle+\langle a,b\rangle^2.
    \end{align*}
    As $\Tr_{\mathbb{F}_q/\mathbb{F}_{2}}(\langle a,b\rangle) = \Tr_{\mathbb{F}_q/\mathbb{F}_{2}}(\langle a,b\rangle^2)$, the implications
    \begin{align*}
        \text{$\V(\vartheta_a)$, $\V(\vartheta_b)$ are tangent} &\Longleftrightarrow 
        \vartheta_a(a+b)=0 \\
        &\Longrightarrow \Tr_{\mathbb{F}_q/\mathbb{F}_{2}}(\vartheta_a(a+b))=0 \\
        &\Longleftrightarrow \Tr_{\mathbb{F}_q/\mathbb{F}_{2}}(\vartheta_0(a)) = \Tr_{\mathbb{F}_q/\mathbb{F}_{2}}(\vartheta_0(b)) \\
        &\Longleftrightarrow \text{$\vartheta_a$, $\vartheta_b$ have the same type}
    \end{align*}
    hold by Lemma \ref{lm:tangent} and Theorem \ref{th:structure_sp_2m_q}(i). If $q=2$, then the right implication above becomes equivalence, as well. 
\end{proof}

In the following lemma, we make a first attempt to interpret the graphs $NO^{\pm}(2m+1,q)$ analytically.

\begin{lem} \label{lm:anal2-prep}
Define the graph $(W^{\pm},F)$ with 
\begin{align*}
    W^+ & =\{a\in \mathbb{F}_q^{2m} \mid \Tr_{\mathbb{F}_q/\mathbb{F}_{2}}(\vartheta_0(a))=0\} \\
    W^- & =\{a\in \mathbb{F}_q^{2m} \mid \Tr_{\mathbb{F}_q/\mathbb{F}_{2}}(\vartheta_0(a))=1\} \\
    F & =\{(a,b)\in W^\pm\times W^\pm \mid \vartheta_a(a+b)=0 \text{ and } a\neq b\}.
\end{align*}
Then $(W^\pm,F) \cong NO^{\pm}(2m+1,q)$.
\end{lem}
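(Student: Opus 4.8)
The plan is to establish the isomorphism $(W^\pm,F)\cong NO^\pm(2m+1,q)$ by exploiting the group isomorphism $O(2m+1,q)\cong Sp(2m,q)$ together with the description of $\Omega$ and its two $Sp(2m,q)$-orbits $\Omega^\pm$ given in Theorem \ref{th:structure_sp_2m_q}. First I would set up the dictionary between hyperplane sections of the parabolic quadric $Q(2m,q)\subset PG(2m,q)$ and the quadratic forms in $\Omega$. Concretely, fix a parabolic quadratic form $P$ on $\mathbb{F}_q^{2m+1}$ whose linearization has $1$-dimensional radical $V_0=\langle e_0\rangle$; a hyperplane $H$ not through the nucleus $N=P(e_0)$ is a complement to $V_0$, and restricting $P$ to $H$ (after identifying $H\cong V/V_0\cong\mathbb{F}_q^{2m}$) gives a non-degenerate quadratic form on $V$ that linearizes to the induced symplectic form $\langle.,.\rangle$, hence an element of $\Omega$. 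One checks this sets up a bijection between $\mathcal H^\pm$ (hyperplanes meeting $Q$ in a non-singular quadric of type $\pm1$) and $\Omega^\pm$; the type correspondence is exactly the statement that $Q(2m,q)$ is the "union" of its hyperbolic and elliptic sections, and matches the cardinalities $|\mathcal H^\pm|=|\Omega^\pm|=\tfrac12(q^{2m}\pm q^m)$ from Lemma \ref{lm:no-2m+1-q} and Theorem \ref{th:structure_sp_2m_q}(ii). Composing with the parametrization $\Omega=\{\vartheta_a\mid a\in\mathbb{F}_q^{2m}\}$ and Theorem \ref{th:structure_sp_2m_q}(i), which identifies $\Omega^\pm$ with $W^\pm=\{a\mid\Tr(\vartheta_0(a))=0 \text{ (resp. }1)\}$, yields a bijection $\mathcal H^\pm\to W^\pm$, $H\mapsto a$ where $H$ corresponds to $\vartheta_a$.

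Next I would translate the adjacency relation. Two vertices $H_1,H_2$ of $NO^\pm(2m+1,q)$ are adjacent iff $H_1\cap H_2\cap Q(2m,q)$ is degenerate. Passing through the above dictionary, if $H_i$ corresponds to $\vartheta_{a_i}$, then $H_1\cap H_2\cap Q(2m,q)$ corresponds to $\V(\vartheta_{a_1})\cap\V(\vartheta_{a_2})$ inside $PG(2m-1,q)=PG(V)$. So adjacency becomes exactly condition (ii) of Lemma \ref{lm:tangent}, i.e. $\V(\vartheta_{a_1})\cap\V(\vartheta_{a_2})$ is degenerate, which by that lemma is equivalent to $\vartheta_{a_1}(a_1+a_2)=0$ — precisely the defining relation $F$ of $(W^\pm,F)$. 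The care needed here is to verify that ``intersecting the two hyperplanes and then the parabolic quadric'' on the $PG(2m,q)$ side really corresponds to ``intersecting the two quadrics $\V(\vartheta_{a_1}),\V(\vartheta_{a_2})$'' on the $PG(2m-1,q)$ side; this comes down to the geometric fact that the map sending a point of $V/V_0$ to the pair (its value on $H_i$) identifies $\V(\vartheta_{a_i})$ with the projection of $H_i\cap Q(2m,q)$ from the nucleus, so that $H_1\cap H_2\cap Q$ projects onto $\V(\vartheta_{a_1})\cap\V(\vartheta_{a_2})$ isomorphically, preserving (non)degeneracy.

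An alternative, slicker route avoids coordinates almost entirely: both $NO^\pm(2m+1,q)$ and $(W^\pm,F)$ carry a vertex-transitive action of $Sp(2m,q)$ (via $O(2m+1,q)\cong Sp(2m,q)$ in the first case, and via the action on $\Omega^\pm\cong W^\pm$ in the second, by Theorem \ref{th:structure_sp_2m_q}(iii)), the vertex sets are identified equivariantly by the bijection constructed above, and the adjacency relations are both $Sp(2m,q)$-invariant. So it suffices to check that a single pair $(H_1,H_2)$ is adjacent in $NO^\pm$ iff the corresponding pair $(a_1,a_2)$ satisfies $\vartheta_{a_1}(a_1+a_2)=0$, for one representative from each $Sp(2m,q)$-orbit on pairs of distinct vertices; the orbit count matches the number of values the quantity $\vartheta_{a_1}(a_1+a_2)$ takes. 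I expect the main obstacle to be the first paragraph: pinning down the correspondence $H\leftrightarrow\vartheta_a$ carefully enough that the type statement and the identity ``$H_1\cap H_2\cap Q$ degenerate $\Leftrightarrow\V(\vartheta_{a_1})\cap\V(\vartheta_{a_2})$ degenerate'' are genuinely justified rather than asserted. Once that bridge is in place, Lemma \ref{lm:tangent} does all the remaining work and the proof is short. I would therefore spend most of the write-up making the hyperplane-to-quadratic-form dictionary explicit (perhaps fixing coordinates so that $H$ is $\{x_0=c(x_1,\dots,x_{2m})\}$ for a linear functional $c$, and computing the restricted form), and then invoke Lemma \ref{lm:tangent} and Theorems \ref{th:structure_sp_2m_q}--\ref{th:symple-complements} to finish.
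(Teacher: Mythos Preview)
Your proposal is correct and follows essentially the same route as the paper: set up the bijection $\mathcal{H}^\pm \leftrightarrow \Omega^\pm \leftrightarrow W^\pm$ via projection from the nucleus (equivalently, via the identification $H\cong V/V_0$), then translate degeneracy of $H_1\cap H_2\cap Q$ into degeneracy of $\V(\vartheta_{a_1})\cap\V(\vartheta_{a_2})$ and invoke Lemma~\ref{lm:tangent}. The paper's proof is exactly this, with the projection realized concretely as the central projection $\pi$ from $N$ onto a fixed non-tangent hyperplane $H_0$; your alternative equivariance argument is not used there.
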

\begin{proof}
Firstly see that $W^{\pm}$ and $\Omega^{\pm}$ are in 1-to-1 correspondence by associating $a\in W^{\pm}$ to $\vartheta_{a}\in\Omega^{\pm}$. Fix a hyperplane $H_0$ in $PG(2m,q)$ not through the nucleus $N=P(u)$ of the parabolic quadric $Q(2m,q)$. Any line through $N$ intersects both $Q$ and $H_0$ in unique points. In other words, the central projection $\pi$ from $N$ to $H_0$ is a bijection between $Q$ and $H_0$. Moreover, $\pi$ preserves the symplectic form, since $\langle u+x,u+y \rangle=\langle x,y \rangle$ holds for all $x,y\in \mathbb{F}_q^{2m+1}$

As $H_0$ is not tangent to $Q$, the restriction of $\langle .,. \rangle$ to $H_0$ is non-singular. Let $\Omega^{\pm}$ be the set of quadratic forms on $H_0$, linearizing to $\langle .,. \rangle|_{H_0}$. Moreover, since $\pi$ preserves the symplectic form, $\pi$ induces a bijection between $\mathcal{H}^{\pm}$ and $\Omega^{\pm}$. Indeed, if a certain hyperplane $H$ does not contain $N$, then $Q\cap H$ is non-singular in $H$ and $Q'=\pi(Q\cap H)$ is non-singular in $H_0$. There is a quadratic form $\Theta$ such that $Q'=\V(\Theta)$, and $\Theta$ linearizes to a multiple of $\langle .,. \rangle$ as $\pi$ preserves the symplectic structure. 
In particular, $\Theta=\vartheta_a$ for some $a\in \mathbb{F}_q^{2m}$.
    
Fix $H_1,H_2 \in\mathcal{H}^{\pm}$ with $\pi(Q\cap H_1)=\V(\vartheta_a)$, $\pi(Q\cap H_2)=\V(\vartheta_b)$, $a,b \in \mathbb{F}_q^{2m}$, $\vartheta_a,\vartheta_b \in \Omega^{\pm}$. Since $Q\cap H_1$ is non-degenerate, 
\begin{align*}
H_1\sim H_2 &\Longleftrightarrow Q\cap H_1\cap H_2 \mbox{ degenerate} \\
&\Longleftrightarrow \pi(Q\cap H_1) \cap \pi(Q\cap H_2) \mbox{ degenerate} \\
& \Longleftrightarrow \V(\vartheta_a) \cap \V(\vartheta_b)\mbox{ degenerate} \\
& \Longleftrightarrow \vartheta_a(a+b)=0,
\end{align*}
where the last implication follows from Lemma \ref{lm:tangent}.
\end{proof}

We can now analytically describe the graphs $NO^\pm(2m+1,q)$.

\begin{prop}[Analytic description of $NO^{\pm}(2m+1,q)$] \label{pr:anal2}
Let $\Theta$ be a non-degenerate quadratic form on $\mathbb{F}_q^{2m}$ of type $\mp1$. Let $\langle .,. \rangle$ be the symplectic form associated to $\Theta$. Define the graph $(V,E)$ with 
\begin{align*}
    V & =\{a\in \mathbb{F}_q^{2m} \mid \Tr_{\mathbb{F}_q/\mathbb{F}_{2}}(\Theta(a))=1\} \\
    E & =\{(a,b)\in V\times V \mid \text{$a\neq b$ and $\Theta(a+b)=\langle a,b \rangle^2$}\}.
\end{align*}
Then $(V,E) \cong NO^{\pm}(2m+1,q)$.
\end{prop}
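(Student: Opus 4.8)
The plan is to deduce Proposition~\ref{pr:anal2} from Lemma~\ref{lm:anal2-prep} by a simple change of quadratic form inside $\Omega$. The graph in Lemma~\ref{lm:anal2-prep} is built from the reference form $\vartheta_0$: its vertex set $W^\pm$ is cut out by $\Tr(\vartheta_0(a))$, and its edges by the condition $\vartheta_a(a+b)=0$, where $\vartheta_a=\vartheta_0+\langle a,\cdot\rangle^2$. In the Proposition we are instead handed an arbitrary non-degenerate form $\Theta$ of type $\mp1$. Since $\Theta$ linearizes to $\langle .,.\rangle$, Theorem~\ref{th:structure_sp_2m_q} (together with the discussion showing $\Omega=\{\vartheta_a\}$) gives $\Theta=\vartheta_c$ for a unique $c\in\mathbb{F}_q^{2m}$, and the type hypothesis says precisely $\vartheta_c\in\Omega^\mp$. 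The first step, then, is to rewrite both the vertex and the edge conditions of the Proposition in terms of $\vartheta_0$ and $c$, and check they match those of Lemma~\ref{lm:anal2-prep} after a translation $a\mapsto a+c$.

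Concretely, for the vertex set I would compute, using \eqref{eq:theta_a} and bilinearity,
\[
\Theta(a)=\vartheta_c(a)=\vartheta_0(a)+\langle c,a\rangle^2,
\]
so $\Tr(\Theta(a))=\Tr(\vartheta_0(a))+\Tr(\langle c,a\rangle^2)=\Tr(\vartheta_0(a))+\Tr(\langle c,a\rangle)$ because squaring fixes the trace. Substituting $a\mapsto a+c$ and expanding $\vartheta_0(a+c)=\vartheta_0(a)+\vartheta_0(c)+\langle c,a\rangle$ shows that $\Tr(\Theta(a+c))=\Tr(\vartheta_0(a))+\Tr(\vartheta_0(c))$; since $\vartheta_c\in\Omega^\mp$ means $\Tr(\vartheta_0(c))$ has the value distinguishing $\Omega^\mp$, the map $a\mapsto a+c$ carries $W^\pm$ (the set $\Tr(\vartheta_0(a))=\cdot$) bijectively onto $\{a\mid\Tr(\Theta(a))=1\}$. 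For the edges I would similarly verify that the condition $\Theta(a+b)=\langle a,b\rangle^2$ appearing in the Proposition is exactly the condition $\vartheta_c(a+b)=0$: indeed $\vartheta_c(a+b)=\Theta(a)+\Theta(b)+\langle a,b\rangle$ and on vertices one can eliminate $\Theta(a),\Theta(b)$, or more directly $\Theta(a+b)=\Theta(a)+\Theta(b)+\langle a,b\rangle$ and the relation $\vartheta_c(a+b)=0$ unwinds to $\Theta(a+b)=\langle a,b\rangle^2$ after accounting for the $\langle\cdot,\cdot\rangle$ versus $\langle\cdot,\cdot\rangle^2$ discrepancy built into \eqref{eq:theta_a}. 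The key algebraic point is that $\vartheta_c(a+b)$ and $\Theta(a+b)-\langle a,b\rangle^2$ differ by terms that either vanish or are absorbed by the change of variable, and that translation by $c$ is an isometry of $\langle .,.\rangle$, hence preserves the edge relation.

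Assembling these, the affine map $a\mapsto a+c$ is an isomorphism from the graph $(W^\pm,F)$ of Lemma~\ref{lm:anal2-prep} to the graph $(V,E)$ of the Proposition, and by Lemma~\ref{lm:anal2-prep} the former is $NO^\pm(2m+1,q)$, so $(V,E)\cong NO^\pm(2m+1,q)$ as claimed. I would also remark that one must be slightly careful with the sign bookkeeping: $\Theta$ has type $\mp1$ while the graph is $NO^\pm$, which is consistent with Proposition~\ref{pr:anal2}'s statement and with Lemma~\ref{lm:anal2-prep} once one recalls (from the paragraph preceding Lemma~\ref{lm:no-2m+1-q} and the proof of Lemma~\ref{lm:anal2-prep}) that the hyperplane sections $\mathcal H^\pm$ of the parabolic quadric correspond to forms in $\Omega^\pm$, whose trace value is pinned down accordingly.

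The main obstacle is purely bookkeeping: getting the four traces and the $\langle\cdot,\cdot\rangle$-versus-$\langle\cdot,\cdot\rangle^2$ substitutions to line up so that the translation by $c$ genuinely sends vertices to vertices and edges to edges, with the $\pm$ signs ending up on the correct side. There is no conceptual difficulty — everything reduces to Lemma~\ref{lm:anal2-prep}, the identity $\Omega=\{\vartheta_a\}$, the fact that $\Tr$ is square-invariant, and that $\langle .,.\rangle$ is translation-invariant — but the sign/trace accounting is the one place where an error could creep in, so I would carry out that computation explicitly.
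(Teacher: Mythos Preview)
Your overall strategy is the same as the paper's: write $\Theta=\vartheta_c$ for some $c$, and show that the translation $\tau_c:a\mapsto a+c$ is a graph isomorphism from $(W^\pm,F)$ of Lemma~\ref{lm:anal2-prep} to $(V,E)$. Your vertex-set computation is correct, and the sign bookkeeping is fine.

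However, your treatment of the edges contains a genuine slip. You write that ``the condition $\Theta(a+b)=\langle a,b\rangle^2$ \ldots\ is exactly the condition $\vartheta_c(a+b)=0$''. This is false: since $\Theta=\vartheta_c$, the statement $\vartheta_c(a+b)=0$ simply says $\Theta(a+b)=0$, which is not the edge relation of $E$. More importantly, $\vartheta_c(a+b)=0$ is \emph{not} the edge relation of $F$ either. In Lemma~\ref{lm:anal2-prep} the condition for $(a',b')\in F$ is $\vartheta_{a'}(a'+b')=0$, where the subscript is the vertex $a'$, not the fixed base point $c$. After the translation $a'=a+c$, $b'=b+c$, this becomes $\vartheta_{a+c}(a+b)=0$. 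So the identity you actually need to verify is
\[
\Theta(a+b)=\langle a,b\rangle^2 \quad\Longleftrightarrow\quad \vartheta_{a+c}(a+b)=0,
\]
which follows from $\vartheta_{a+c}(u)=\vartheta_0(u)+\langle a+c,u\rangle^2=\vartheta_c(u)+\langle a,u\rangle^2+\langle c,u\rangle^2+\langle c,u\rangle^2=\Theta(u)+\langle a,u\rangle^2$ applied to $u=a+b$, together with $\langle a,a+b\rangle=\langle a,b\rangle$. This is exactly the computation the paper carries out (with $d$ in place of your $c$). Your appeal to ``translation by $c$ is an isometry of $\langle.,.\rangle$, hence preserves the edge relation'' is therefore insufficient: the two edge relations are different, and the point is that $\tau_c$ intertwines them, which requires the computation above rather than isometry alone.
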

\begin{proof}
    Let $\varepsilon=\pm 1$ be the sign of the graph; the quadratic form $\Theta$ has type $-\varepsilon$. Define $\lambda(\varepsilon)\in \mathbb{F}_2$ such that $\varepsilon=(-1)^{\lambda(\varepsilon)}$. Write $\Theta=\vartheta_d$ for some $d\in \mathbb{F}_q^{2m}$. Since $\vartheta_0$ has type $+1$, we have
    \[\Tr_{\mathbb{F}_q/\mathbb{F}_{2}}(\vartheta_0(d))=1+\lambda(\varepsilon)\]
    by Theorem \ref{th:structure_sp_2m_q}(i). This implies
    \begin{align*}
        \Tr_{\mathbb{F}_q/\mathbb{F}_{2}}(\vartheta_d(x)) &= \Tr_{\mathbb{F}_q/\mathbb{F}_{2}}(\vartheta_0(x) + \langle x,d \rangle^2) \\
        & =\Tr_{\mathbb{F}_q/\mathbb{F}_{2}}(\vartheta_0(x) + \langle x,d \rangle) \\
        & =\Tr_{\mathbb{F}_q/\mathbb{F}_{2}}(\vartheta_0(x+d) + \vartheta_0(d)) \\
        & =\Tr_{\mathbb{F}_q/\mathbb{F}_{2}}(\vartheta_0(x+d)) + 1+\lambda(\varepsilon). \\
    \end{align*}
    In other words, for all $x\in \mathbb{F}_q^{2m}$
    \[\Tr_{\mathbb{F}_q/\mathbb{F}_{2}}(\vartheta_d(x)) = 1 \Longleftrightarrow \Tr_{\mathbb{F}_q/\mathbb{F}_{2}}(\vartheta_0(x+d)) = \lambda(\varepsilon).\]
    This means that $W^\pm = V+d$, and the translation $\tau_d:x\mapsto x+d$ induces a bijection 
    \[V \longleftrightarrow W^\pm.\]
    Here, $W^\pm = \{a\in \mathbb{F}_q^{2m} \mid \Tr_{\mathbb{F}_q/\mathbb{F}_{2}}(\vartheta_0(a))=\lambda(\varepsilon)\}$ is defined as in Lemma \ref{lm:anal2-prep}. For the pair $(a,b) \in V\times V$, $a\neq b$,  we have
    \begin{align*}
        (a,b) \in E &\Longleftrightarrow \vartheta_d(a+b)=\langle a,b \rangle^2 \\
        &\Longleftrightarrow \vartheta_d(a+b)=\langle a+b,a \rangle^2 \\
        &\Longleftrightarrow \vartheta_{a+d}(a+b)+\vartheta_{a}(a+b)+\vartheta_{0}(a+b)+\langle a+b,a \rangle^2=0 \\
        &\Longleftrightarrow \vartheta_{a+d}(a+b)=0\\
        &\Longleftrightarrow (a+d,b+d) \in F.
    \end{align*}
    Again, $F$ is the set of edges of the graph in Lemma \ref{lm:anal2-prep}. What we see is that $\tau_d$ induces an isomorphism between the graphs $(V,E)$ and $(W^\pm,F)$. 
\end{proof}

\section{Two-graphs of strongly regular polar graphs}\label{sec:main-two-graphs}

\subsection{The symplectic two-graphs}
Define the graph $\Sigma_{2m}$ with vertex set $\mathbb{F}_{2}^{2m}$ and adjacency relation $\langle a,b\rangle=0$ for vectors $a\neq b$. This graph is not regular, since the origin is adjacent to all nonzero vectors. The automorphism group of $\Sigma_{2m}$ contains the linear symplectic group $Sp(2m,2)$. 

For non-adjacent vertices $a,b$ of $\Sigma_{2m}$, we have $\langle a,b \rangle =1$. The triple $\{a,b,c\}$ has an odd number of edges if and only if 
\begin{align} \label{eq:sympl-twograph}
    \langle a,b\rangle+\langle a,c\rangle+\langle b,c\rangle=0.
\end{align}
This means that the two-graph of $\Sigma_{2m}$ is the set of triples on $\mathbb{F}_{2}^{2m}$ that satisfy \eqref{eq:sympl-twograph}. We denote it by $\mathcal{T}_{2m}$. As
\begin{align*} 
    \langle a,b\rangle+\langle a,c\rangle+\langle b,c\rangle=\langle a+d,b+d\rangle+\langle a+d,c+d\rangle+\langle b+d,c+d\rangle,
\end{align*}
the translation $\tau_d:x\mapsto x+d$ stabilizes $\mathcal{T}_{2m}$ for all $d$. This implies that $ASp(2m,2)$ is a subgroup of $\Aut(\mathcal{T}_{2m})$. In particular, $\Aut(\mathcal{T}_{2m})$ is $2$-transitive and $\mathcal{T}_{2m}$ is a regular two-graph. 

By Theorem \ref{th:symple-complements}, there is a subgroup $H_2\cong Sp(2m,2)$ in $ASp(2m,2)$ with orbits $X^+$, $X^-$ of size $2^{2m-1}-2^{m-1}$ and $2^{2m-1}+2^{m-1}$. Moreover, $H_2$ acts $2$-transitively on $X^+,X^-$. We denote by $\mathcal{X}^\pm_{2m}$ the two-graph whose set of triples $\{a,b,c\}$ satisfies \eqref{eq:sympl-twograph}. Then $(X^+, \mathcal{X}^+_{2m})$, $(X^-, \mathcal{X}^-_{2m})$ are sub-two-graphs of $\mathcal{T}_{2m}$. As their automorphism group is $2$-transitive, $\mathcal{X}^+_{2m}$ and $\mathcal{X}^-_{2m}$ are regular two-graphs. 

The above definition is rather sloppy since the subgroup $H_2$ is defined only up to conjugacy in $ASp(2m,2)$, and the sets $X^+, X^-$ are defined up to translations. One possibility is to choose the sets
\begin{align*}
    X^+ &= \{a\in \mathbb{F}_{2}^{2m} \mid \vartheta_0(a)=1 \},\\
    X^- &= \{a\in \mathbb{F}_{2}^{2m} \mid \vartheta_0(a)=0 \}
\end{align*}
according to Theorem \ref{th:structure_sp_2m_q}. In the following proposition, we opt for a representation that is in agreement with the analytic descriptions of our strongly regular graphs (Propositions \ref{pr:anal1} and \ref{pr:anal2}). 

\begin{prop}\label{t-g}
    Let $\Theta$ be a non-degenerate quadratic form on $\mathbb{F}_2^{2m}$ of type $\pm1$. Let $\langle .,. \rangle$ be the symplectic form associated to $\Theta$. Define the set of vectors
    \[X=\{a\in \mathbb{F}_{2}^{2m} \mid \Theta(a)=1\}\]
    and the set of triples
    \[T=\{\{a,b,c\} \mid a\neq b\neq c\neq a, \; \langle a,b\rangle+\langle a,c\rangle+\langle b,c\rangle=0\}.\]
    Then the following hold:
    \begin{enumerate}[(i)]
        \item $\mathcal{X}^\pm_{2m}=(X,T)$ is a regular two-graph of degree $2^{2m-2}\mp 2^{m-1}-2$. 
        \item The automorphism group of $\mathcal{X}^\pm_{2m}$ is isomorphic to $Sp(2m,2)$ in its $2$-transitive action on $2^{2m-1}\mp 2^{m-1}$ points. 
    \end{enumerate}
\end{prop}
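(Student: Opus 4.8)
The plan is to reduce Proposition \ref{t-g} to the structural facts already assembled in the excerpt, chiefly Theorems \ref{th:structure_sp_2m_q}, \ref{th:action_sp_2m_2}, \ref{th:symple-complements}, together with the discussion of $\Sigma_{2m}$ and $\mathcal T_{2m}$ that immediately precedes the statement. First I would identify the pair $(X,T)$ defined here with one of the sub-two-graphs $(X^\pm,\mathcal X^\pm_{2m})$ of $\mathcal T_{2m}$. Writing $\Theta=\vartheta_a$ for a suitable $a\in\mathbb F_2^{2m}$, the set $X=\{x\mid\Theta(x)=1\}=\{x\mid\vartheta_0(x+a)+\vartheta_0(a)+\langle a,x\rangle^2+\langle a,x\rangle=0\}$; in characteristic two $\langle a,x\rangle^2=\langle a,x\rangle$, so after the translation $\tau_a$ this set becomes $\{y\mid\vartheta_0(y)=\vartheta_0(a)\}$, i.e. one of the two sets $X^+=\{\vartheta_0=1\}$, $X^-=\{\vartheta_0=0\}$ depending on $\Theta(0)$-type, and the edge/triple condition \eqref{eq:sympl-twograph} is translation-invariant by the displayed identity preceding the proposition. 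Thus $(X,T)$ is, up to the translation $\tau_a$, exactly $(X^+,\mathcal X^+_{2m})$ or $(X^-,\mathcal X^-_{2m})$, and by Theorem \ref{th:structure_sp_2m_q}(ii) the hyperbolic case has $|X^-|$-type count, so one checks that $\Theta$ of type $\pm1$ yields $|X|=2^{2m-1}\mp2^{m-1}$, matching the claimed point count in (ii).

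For part (i), regularity: since $(X,T)$ is a sub-two-graph of the regular two-graph $\mathcal T_{2m}$ whose automorphism group (containing $ASp(2m,2)$, hence $2$-transitive) stabilizes $X$ setwise via the relevant copy of $Sp(2m,2)$ — this is exactly Theorem \ref{th:symple-complements} giving $H_2$ with orbits $X^+,X^-$ acting $2$-transitively on each — the induced automorphism group on $X$ is $2$-transitive, and a $2$-transitive automorphism group forces the pair-degree to be constant, so $(X,T)$ is a regular two-graph. To pin down the degree, I would count: fix $a,b\in X$ distinct and count $c\in X\setminus\{a,b\}$ with $\langle a,c\rangle+\langle b,c\rangle=\langle a,b\rangle$, i.e. $\langle a+b,c\rangle=\langle a,b\rangle$, a fixed linear condition on $c$ cutting $X$ roughly in half. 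Concretely, $\langle a+b,\cdot\rangle$ is a nonzero linear functional (as $a\ne b$ and $\langle.,.\rangle$ is non-degenerate), so its fibre $\{c\mid\langle a+b,c\rangle=\langle a,b\rangle\}$ is an affine hyperplane $A$ of size $2^{2m-1}$; the degree is then $|A\cap X|$ minus the number of $\{a,b\}$ themselves lying in $A$ (note $a,b\in A$ always, since $\langle a+b,a\rangle=\langle a,b\rangle=\langle a+b,b\rangle$), so degree $=|A\cap X|-2$. Evaluating $|A\cap X|=|\{c\mid\langle a+b,c\rangle=\langle a,b\rangle,\ \Theta(c)=1\}|$ is the arithmetic core: it is the number of points on which a fixed affine hyperplane meets the quadric-complement, and for a non-degenerate quadric of type $\pm1$ in $PG(2m-1,2)$ this is a standard quadric-section count, giving $2^{2m-2}\mp2^{m-1}$; subtracting $2$ gives the asserted degree $2^{2m-2}\mp2^{m-1}-2$. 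I would double-check this against the strongly regular parameters of Lemma \ref{lm:no-2m-2}: the descendant $\mathcal X^\pm_{2m}$ at a vertex should be $NO^\pm(2m,2)$ with an isolated vertex by Proposition \ref{pr:anal1}, so the two-graph degree equals $k$ of that graph computed on $v-1$ remaining vertices — a consistency check rather than a separate argument.

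For part (ii), I claim $\Aut(\mathcal X^\pm_{2m})\cong Sp(2m,2)$. The containment $Sp(2m,2)\hookrightarrow\Aut(\mathcal X^\pm_{2m})$ is immediate from the above: the stabilizer $R^\pm$ of $\Theta$ in $Sp(2m,2)$ (equivalently the copy $H_2$ from Theorem \ref{th:symple-complements}, conjugated by $\tau_a$) preserves $X$ and the relation \eqref{eq:sympl-twograph}, and is isomorphic to $O^\pm(2m,2)\cong$ the relevant symplectic group in its $2$-transitive action by Theorems \ref{th:structure_sp_2m_q}(iii) and \ref{th:action_sp_2m_2}. The reverse inclusion — that there are no extra automorphisms — is the step I expect to be the main obstacle. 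The clean route is to invoke the known correspondence between automorphisms of a regular two-graph and automorphisms of its switching class, then identify the switching class with that of $NO^\pm(2m,2)$ (via the descendant and Proposition \ref{pr:anal1}) and appeal to the known full automorphism group of these polar graphs / symplectic two-graphs in the literature (the symplectic two-graphs on $2^{2m-1}\mp2^{m-1}$ points have automorphism group $Sp(2m,2)$, a classical fact). If one prefers a self-contained argument, I would instead show directly that any automorphism of $(X,T)$ extends to an automorphism of $\mathcal T_{2m}$ fixing the partition $\{X^+,X^-\}$ — using that $X$ together with its two-graph determines $\Theta$ up to the symplectic action — and then use that $\Aut(\mathcal T_{2m})=ASp(2m,2)$ (which itself needs the classification of $2$-transitive groups or a direct proof). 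Given the excerpt's stated intention to keep such facts in the appendix, I would present the proof of (ii) by reducing to "the automorphism group of the symplectic two-graph $\mathcal X^\pm_{2m}$ is $Sp(2m,2)$ acting $2$-transitively on $2^{2m-1}\mp2^{m-1}$ points," citing the appendix or \cite{Tay} for the identification of this group, since everything else in the proposition follows from the machinery already built.
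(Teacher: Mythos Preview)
Your proposal follows essentially the same line as the paper: translate $X$ onto one of the $H_2$-orbits $X^\pm$ to identify $(X,T)$ with a sub-two-graph of $\mathcal T_{2m}$, deduce regularity and the $2$-transitive $Sp(2m,2)$-action from Theorems~\ref{th:action_sp_2m_2} and~\ref{th:symple-complements}, and then compute the degree. The one noteworthy difference is in this last step. The paper fixes $a,b\in X$ with $\langle a,b\rangle=1$, so that $\Theta(a+b)=1$ and hence $a+b\in X$; it then observes that $\{a,b,x\}\in T$ iff $\langle a+b,x\rangle=1$ iff $x$ is a \emph{non}-neighbour of $a+b$ in $NO^\pm(2m,2)$, and reads the degree off directly as $v-k-3$ from Lemma~\ref{lm:no-2m-2}. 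Your generic hyperplane-section count arrives at the same number, but as written it glosses over the fact that the affine hyperplane $A$ has different geometric type depending on whether $\langle a,b\rangle=0$ or $1$; you should either invoke the $2$-transitivity you just established to justify computing at a single convenient pair, or do the short case split.

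One small slip in your aside: the descendant of $\mathcal X^\pm_{2m}$ at a vertex is not $NO^\pm(2m,2)$ plus an isolated point but rather $\Gamma(O^\mp(2m,2))$ plus an isolated point (this is Proposition~\ref{descendant}, proved later); $NO^\pm(2m,2)$ has no isolated vertex and merely lies in the same switching class. Finally, you are right that the assembled machinery only yields the inclusion $Sp(2m,2)\le\Aut(\mathcal X^\pm_{2m})$ and the $2$-transitivity, not the full equality claimed in (ii); the paper's own proof does no more than this either and implicitly leans on the classical identification of the symplectic two-graphs.
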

\begin{proof}
    We have seen in the proof of Proposition \ref{pr:anal2}, that for some $d\in \mathbb{F}_{2}^{2m}$, the translation $\tau_d$ induces a bijection between the sets $W^{\pm}$ and $V$, represented here by $X^\pm$ and $X$. Translations are contained in $ASp(2m,2)$, and preserve the relation \eqref{eq:sympl-twograph}, hence (ii) and the regularity holds. It only remains to compute the degree. Let $a,b \in \mathbb{F}_{2}^{2m}$ be different vectors such that their projective line $\ell$ is exterior to the projective quadric $\Theta=0$. Since the third point of $\ell$ is $a+b$, we have
    \[\Theta(a)=\Theta(b)=\Theta(a+b)=1.\]
    Hence $\langle a,b \rangle =1$. Fix any point $x\in \mathbb{F}_{2}^{2m}\setminus \{a,b\}$ with $\Theta(x)=1$. 
    \begin{align*}
        \{a,b,x\} \in T & \Longleftrightarrow \langle a,x\rangle+\langle b,x\rangle+\langle a,b\rangle= 0\\
        & \Longleftrightarrow \langle a+b,x \rangle =1 \\
        & \Longleftrightarrow \text{$a+b$ and $x$ are non-adjacent in $NO^\pm(2m,2)$.}
    \end{align*}
    By Lemma \ref{lm:no-2m-2}, the parameters of $NO^\pm(2m,2)$ are $v=2^{2m-1}\mp 2^{m-1}$ and $k=2^{2m-2}-1$. Therefore, the degree of the two-graph is
    \begin{align*}
        |\{x\in \mathbb{F}_{2}^{2m} \mid \{a,b,x\} \in T \}| &= v-k-1-2 \\
        &= (2^{2m-1}\mp 2^{m-1})-(2^{2m-2}-1)-3 \\
        &= 2^{2m-2}\mp 2^{m-1}-2. \qedhere
    \end{align*}
\end{proof}

\subsection{The descendants of symplectic two-graphs}
In this subsection, we introduce another structure of polar graph related to quadrics in odd projective dimension, over the binary field, see \cite[Sections 2.6.2 and 2.6.3]{brvan}.

\begin{defn}[Orthogonal graphs]
Let $q$ be a prime power. The graph $\Gamma(O^{\pm}(2m,q))$ is the graph whose vertex set is $Q^{\pm}(2m-1,q)$ and in which two vertices are adjacent if and only if the two corresponding points lie on a totally isotropic line of the quadric $Q^{\pm}(2m-1,q)$.
\end{defn}
\begin{lem} \cite[Theorem 2.2.12]{brvan}\label{lm:o-2m-q}
 $\Gamma(O^{\pm}(2m,q))$ is a strongly regular graph with parameters 
  \begin{align*}
 v &= \frac{(q^{m}\mp1)(q^{m-1}\pm1)}{q-1}, \\
 k &= \frac{q(q^{m-1}\mp1)(q^{m-2}\pm1)}{q-1}, \\
 \lambda &= \frac{q^2(q^{m-2}\mp1)(q^{m-3}\pm1)}{q-1}+q-1, \\
 \mu &= \frac{(q^{m-1}\mp1)(q^{m-2}\pm1)}{q-1}.
 \end{align*}
The automorphism group of $\Gamma(O^{\pm}(2m,q))$ contains the orthogonal group $O^{\pm}(2m,q)$.
\end{lem}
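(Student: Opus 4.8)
The plan is to treat $\Gamma(O^{\pm}(2m,q))$ as the collinearity graph of the polar space of the non-singular quadric $Q=\V(\Theta)$, where $\Theta$ is a quadratic form of type $\pm1$ on $\mathbb{F}_q^{2m}$, and to read off the four parameters from the residues of $Q$ at a point, at a totally isotropic line, and at a hyperbolic line. The key elementary fact, valid in any characteristic, is that for distinct points $P=P(a)$, $R=P(b)$ of $Q$ one has: the line $PR$ lies on $Q$ if and only if $P\in R^{\perp}$ (indeed $\Theta(\alpha a+\beta b)=\alpha^{2}\Theta(a)+\beta^{2}\Theta(b)+\alpha\beta\langle a,b\rangle$, which vanishes identically exactly when $\langle a,b\rangle=0$), while otherwise $PR$ meets $Q$ in exactly the two points $P,R$, i.e.\ $PR$ is a hyperbolic line. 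Hence adjacency of $P\neq R$ in $\Gamma(O^{\pm}(2m,q))$ is equivalent to $R\in P^{\perp}$, and the common neighbourhood of $P$ and $R$ is exactly $(Q\cap P^{\perp}\cap R^{\perp})\setminus\{P,R\}$.

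With this in hand the computation proceeds by residues. For $v$ one uses the classical count of points of a non-singular quadric. For the valency $k$, fix a vertex $P$; since $P$ lies in the radical of $\Theta|_{P^{\perp}}$, the section $Q\cap P^{\perp}$ is the cone with vertex $P$ over a non-singular quadric $Q^{\pm}(2m-3,q)$ in a complementary $PG(2m-3,q)$, and counting the points of such a cone ($1+q\,|Q^{\pm}(2m-3,q)|$) and removing $P$ gives $k$. For $\lambda$, take adjacent $P,R$ and put $\ell=PR$; since $\ell$ is totally isotropic, $\ell\subseteq\ell^{\perp}=P^{\perp}\cap R^{\perp}$ and $\ell$ lies in the radical of $\Theta|_{\ell^{\perp}}$, so $Q\cap\ell^{\perp}$ is the cone with vertex line $\ell$ over a non-singular $Q^{\pm}(2m-5,q)$; this cone has $(q+1)+q^{2}\,|Q^{\pm}(2m-5,q)|$ points, and removing $P$ and $R$ yields $\lambda$. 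For $\mu$, take non-adjacent $P,R$; then the hyperbolic line $PR$ is non-degenerate, the ambient space is the orthogonal sum of $PR$ and $\{P,R\}^{\perp}$, so $\{P,R\}^{\perp}$ carries a non-singular form of the same type as $\Theta$ (a hyperbolic line has type $+1$), $Q\cap\{P,R\}^{\perp}$ is a non-singular $Q^{\pm}(2m-3,q)$ containing neither $P$ nor $R$, and every one of its points is a common neighbour of $P$ and $R$, so $\mu=|Q^{\pm}(2m-3,q)|$. Substituting the classical formula $|Q^{\pm}(2k-1,q)|=\frac{(q^{k}\mp1)(q^{k-1}\pm1)}{q-1}$ gives the stated values. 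Finally, every $A\in O^{\pm}(2m,q)$ fixes $\Theta$, hence preserves $Q$ and the relation ``the joining line lies on $Q$'', so $O^{\pm}(2m,q)$ embeds in $\Aut(\Gamma(O^{\pm}(2m,q)))$.

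The main obstacle is not conceptual but bookkeeping: one must check that the residue of $Q^{\pm}$ at an isotropic point, at a totally isotropic line, and at a hyperbolic line is again of the correct type $\pm1$ in the appropriate smaller dimension, and pin down the exact point counts of the point-vertex and line-vertex cones. The type statements are precisely the content of Witt decomposition --- a non-degenerate orthogonal space splits off hyperbolic planes of type $+1$, so quotienting by a totally isotropic subspace of dimension $1$ or $2$, or splitting off a hyperbolic line, preserves the type --- and the cone counts are elementary incidence arithmetic, but both have to be phrased so that the $\pm$ signs stay consistent with Lemma \ref{lm:no-2m-2} and the conventions of Section \ref{sec:prelim}. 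Some care is also needed for small $m$, where the displayed closed forms for $\lambda$ and $\mu$ only make literal sense once $m$ is large enough (for instance $k=0$ and the graph is edgeless exactly when $Q^{\pm}$ contains no lines).
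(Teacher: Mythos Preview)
The paper does not give its own proof of this lemma: it is stated with a citation to \cite[Theorem 2.2.12]{brvan} and no argument is supplied in the text. Your proposal is a correct, self-contained proof, and it is in fact the standard argument one finds in the cited reference: view $\Gamma(O^{\pm}(2m,q))$ as the collinearity graph of the polar space, use the equivalence ``$PR\subseteq Q\Leftrightarrow\langle a,b\rangle=0$'' to identify neighbourhoods with perp-sections, and then compute $v,k,\lambda,\mu$ by identifying $Q\cap P^{\perp}$, $Q\cap\ell^{\perp}$ (for $\ell$ totally isotropic), and $Q\cap(PR)^{\perp}$ (for $PR$ hyperbolic) as a point-cone over $Q^{\pm}(2m-3,q)$, a line-cone over $Q^{\pm}(2m-5,q)$, and a non-singular $Q^{\pm}(2m-3,q)$, respectively. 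The Witt-decomposition reasoning you invoke to keep the type $\pm1$ through the residues is exactly what is needed, and the cone counts $1+q\,|Q^{\pm}(2m-3,q)|$ and $(q+1)+q^{2}\,|Q^{\pm}(2m-5,q)|$ are right. Your caveat about small $m$ (where $Q^{\pm}(2m-5,q)$ has no literal meaning and the closed form for $\lambda$ must be read as $0$ or checked separately) is also appropriate; in the paper's applications only $m\geq 2$ over $\mathbb{F}_2$ arises, so this is not an issue there.
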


\begin{prop}[Analytic description of $\Gamma(O^\pm(2m,2))$] \label{pr:anal_d}
Let $\Theta$ be a non-degenerate quadratic form on $\mathbb{F}_2^{2m}$ of type $\pm1$. Let $\langle .,. \rangle$ be the symplectic form associated to $\Theta$. Define the graph $(V,E)$ with 
\begin{align*}
    V&=\{a\in \mathbb{F}_2^{2m}\setminus\{0\} \mid \Theta(a)=0\} \\
    E&=\{(a,b)\in V\times V \mid \langle a,b \rangle =0 \text{ and } a\neq b\}.
\end{align*} 
Then $(V,E)\cong \Gamma(O^{\pm}(2m,2))$.
\end{prop}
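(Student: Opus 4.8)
The plan is to mimic the proof of Proposition~\ref{pr:anal1} almost verbatim, replacing the non-isotropic condition $\Theta(a)=1$ by the isotropic condition $\Theta(a)=0$ and excluding the zero vector, which geometrically corresponds to passing from exterior points of the quadric to points lying on it. First I would fix $Q^{\pm}(2m-1,2)=\V(\Theta)$ and observe that a non-zero vector $a\in\mathbb{F}_2^{2m}$ with $\Theta(a)=0$ is precisely a coordinate vector of a point $P(a)\in Q^{\pm}(2m-1,2)$, so that $V$ is in $1$-to-$1$ correspondence with the vertex set $Q^{\pm}(2m-1,2)$ of $\Gamma(O^{\pm}(2m,2))$.

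Next I would translate the adjacency. For $a,b\in V$ with $a\neq b$, the projective line $P(a)P(b)$ consists of the three points $\{P(a),P(b),P(a+b)\}$, and this line is totally isotropic (contained in the quadric) if and only if its third point also lies on $Q^{\pm}(2m-1,2)$, i.e. $\Theta(a+b)=0$. Since $\Theta$ linearizes to $\langle .,.\rangle$, we have $\Theta(a+b)=\Theta(a)+\Theta(b)+\langle a,b\rangle=\langle a,b\rangle$, because $\Theta(a)=\Theta(b)=0$. Hence $\Theta(a+b)=0$ if and only if $\langle a,b\rangle=0$, which is exactly the adjacency relation defining $E$. One small point to address: I should note that two distinct points of a quadric span a totally isotropic line exactly when the third point of that line also lies on the quadric — equivalently, when the line meets $Q$ in more than one point it is fully contained in $Q$ — which is standard for quadrics (a line meets a quadric in $0$, $1$, $2$ points, or lies on it), and over $\mathbb{F}_2$ the "$2$ points" case cannot occur for a line with a third point, so "totally isotropic line" and "$\langle a,b\rangle=0$ with $\Theta(a)=\Theta(b)=0$" coincide.

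Putting these together gives the chain
\begin{align*}
    a\sim b &\Longleftrightarrow \text{$P(a)P(b)$ is a totally isotropic line of $Q^{\pm}(2m-1,2)$}\\
    &\Longleftrightarrow P(a+b)\in Q^{\pm}(2m-1,2)\\
    &\Longleftrightarrow \Theta(a+b)=0\\
    &\Longleftrightarrow \langle a,b\rangle=0,
\end{align*}
so the bijection $a\mapsto P(a)$ is a graph isomorphism $(V,E)\to\Gamma(O^{\pm}(2m,2))$. I do not expect any real obstacle here: the argument is essentially identical to Proposition~\ref{pr:anal1}, and the only thing requiring a word of justification is the geometric fact that, in characteristic two with $q=2$, a line through two points of a non-singular quadric lies entirely on the quadric precisely when the (unique) third point does — so that the combinatorial adjacency "tangent line / totally isotropic line" matches the analytic condition $\langle a,b\rangle=0$.
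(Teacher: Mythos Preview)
Your proposal is correct and is exactly the approach the paper takes: the paper's entire proof reads ``The proof works as in Proposition~\ref{pr:anal1},'' and you have simply spelled out that identical argument with the isotropic condition $\Theta(a)=0$ in place of $\Theta(a)=1$.
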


\begin{proof}
    The proof works as in Proposition \ref{pr:anal1}.
\end{proof}
Recall that the descendant of a two-graph $\mathcal T=(X,T)$ w.r.t. the vertex $w\in X$ is defined by considering the graph structure $\mathcal T_w=(X,F)$ where $(u,v)\in F$ if and only if $\{u,v,w\}\in T$. We can compute the descendant of the two-graph $\mathcal{X}^{\pm}_{2m}$.

\begin{prop}\label{descendant}
 The descendant of $\mathcal{X}^{\pm}_{2m}$ is the graph $\Gamma(O^{\mp}(2m,2))=(V,E)$, plus an isolated vertex.
\end{prop}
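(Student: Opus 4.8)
The plan is to compute the descendant of the two-graph $\mathcal{X}^{\pm}_{2m}$ at a conveniently chosen base vertex and match it, via an explicit affine change of coordinates, with the analytic model of $\Gamma(O^{\mp}(2m,2))$ from Proposition \ref{pr:anal_d}. Recall from Proposition \ref{t-g} that $\mathcal{X}^{\pm}_{2m}=(X,T)$ with $X=\{a\in\mathbb{F}_2^{2m}\mid \Theta(a)=1\}$ and $\{a,b,c\}\in T$ iff $\langle a,b\rangle+\langle a,c\rangle+\langle b,c\rangle=0$, where $\Theta$ is non-degenerate of type $\pm1$. First I would fix a base vertex $w\in X$, so $\Theta(w)=1$, and form the descendant $(\mathcal{X}^{\pm}_{2m})_w=(X,F)$: the vertex set is still $X$, but $w$ becomes isolated, and two vertices $u,v\in X\setminus\{w\}$ are adjacent iff $\{u,v,w\}\in T$, i.e. iff $\langle u,v\rangle+\langle u,w\rangle+\langle v,w\rangle=0$.

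The key step is the affine translation $\tau_w:x\mapsto x+w$. Since $\tau_w$ lies in $ASp(2m,2)\le \Aut(\mathcal{T}_{2m})$ and preserves relation \eqref{eq:sympl-twograph}, it sends the descendant adjacency to a simpler one. Concretely, write $a=u+w$, $b=v+w$; using bilinearity of $\langle.,.\rangle$, the condition $\langle u,v\rangle+\langle u,w\rangle+\langle v,w\rangle=0$ becomes $\langle a,b\rangle=0$ after expanding $u=a+w$, $v=b+w$ (the $w$-dependent cross terms cancel, using $\langle w,w\rangle=0$). Meanwhile the vertex set transforms: $\Theta(u)=1$ becomes $\Theta(a+w)=1$, i.e. $\Theta(a)+\Theta(w)+\langle a,w\rangle=1$, i.e. $\Theta(a)+\langle a,w\rangle=0$ since $\Theta(w)=1$. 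Now I would change quadratic form: set $\Theta'(x)=\Theta(x)+\langle x,w\rangle^2=\Theta(x)+\langle x,w\rangle$ over $\mathbb{F}_2$ — this is exactly $\vartheta_{a'}$-style modification from \eqref{eq:theta_a}, so $\Theta'\in\Omega$ still linearizes to $\langle.,.\rangle$, and $\Theta'$ has the opposite type $\mp1$ by Theorem \ref{th:structure_sp_2m_q}(i), because $\Tr(\vartheta_0(\cdot))$ changes parity when we pass from $\Theta=\vartheta_d$ to $\vartheta_{d+w}$ precisely when $\Tr(\vartheta_0(w))$ is odd, which holds since $\Theta(w)=\vartheta_0(w)+\langle d,w\rangle^2$... here I would verify the type flip carefully. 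The image vertex set is then $\{a\mid \Theta'(a)=0\}$, and since $w\mapsto 0$ under $\tau_w$ and $\Theta'(0)=0$, the isolated vertex $w$ becomes the isolated vertex $0$; removing it gives $\{a\in\mathbb{F}_2^{2m}\setminus\{0\}\mid \Theta'(a)=0\}$ with adjacency $\langle a,b\rangle=0$, which is precisely the model of $\Gamma(O^{\mp}(2m,2))$ in Proposition \ref{pr:anal_d}.

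The main obstacle I anticipate is bookkeeping the type change: one must check that the translate of the quadratic form (equivalently, the new quadric $\V(\Theta')$ obtained by the $a+b\mapsto\langle a,b\rangle^2$ correction) genuinely switches between hyperbolic and elliptic, and in the correct direction ($\pm \to \mp$), rather than just asserting it. This is where Theorem \ref{th:structure_sp_2m_q}(i) does the real work: the type of $\vartheta_a$ is governed by $\Tr_{\mathbb{F}_2/\mathbb{F}_2}(\vartheta_0(a))=\vartheta_0(a)$, and since $\Theta(w)=1$ for a vertex $w$ of $NO^{\pm}(2m,2)$, the point $w$ is by Proposition \ref{pr:anal1} exactly a non-isotropic point, so the correction term forces the parity to flip. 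A secondary (but routine) point is confirming that the new isolated vertex is $0$ and that the degree count is consistent: the descendant of a regular two-graph of degree $k$ on $v$ vertices is a graph on $v-1$ non-isolated vertices each of valency $k$, and indeed $2^{2m-1}\mp 2^{m-1}-1 = |Q^{\mp}(2m-1,2)|$ and $2^{2m-2}\mp 2^{m-1}-2$ matches the valency of $\Gamma(O^{\mp}(2m,2))$ from Lemma \ref{lm:o-2m-q} with $q=2$ — a sanity check I would include at the end. Once the change of coordinates is in place, the whole argument is a two-line verification, so I would present it as: apply $\tau_w$, expand, identify $\Theta'$, invoke Proposition \ref{pr:anal_d}.
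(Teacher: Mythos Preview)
Your proposal is correct and follows essentially the same approach as the paper: both arguments fix a base vertex $w\in X$, apply the translation $\tau_w$ to simplify the triple condition $\langle u,v\rangle+\langle u,w\rangle+\langle v,w\rangle=0$ to $\langle a,b\rangle=0$, and then identify the translated vertex set with the analytic model of $\Gamma(O^{\mp}(2m,2))$ from Proposition~\ref{pr:anal_d}. Your treatment is in fact slightly more explicit than the paper's about the type flip $\pm\to\mp$ (the paper just refers back to the argument in Proposition~\ref{pr:anal2}); the verification you sketch works, since over $\mathbb{F}_2$ one has $\vartheta_0(d+w)=\vartheta_0(d)+\vartheta_0(w)+\langle d,w\rangle=\vartheta_0(d)+\Theta(w)=\vartheta_0(d)+1$.
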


\begin{proof}
    Take the two-graph $\mathcal{X}^\pm_{2m}=(X,T)$ as in Proposition \ref{t-g}, and fix $d\in X$. Then $\Theta(d)=1$. The descendant of $\mathcal{X}^\pm_{2m}$ will be denoted as $(\mathcal{X}^\pm_{2m})_d=(X,F)$. As in the proof of Proposition \ref{pr:anal2}, the translation $\tau_d$ gives us the bijection between the vertex sets, where $d$ plays the role of the isolated vertex. We need only to prove the adjacency relation. 
    \begin{align*}
        (a,b)\in F &\Longleftrightarrow \{a,b,d\} \in T \\
        &\Longleftrightarrow \langle a,b\rangle+\langle a,d\rangle+\langle b,d\rangle=0 \\
        &\Longleftrightarrow \langle a+d,b+d\rangle=0 \\
        &\Longleftrightarrow (a+d,b+d) \in E.
    \end{align*}
    Then $\tau_d$ induces an isomorphism between the graphs $(\mathcal{X}^\pm_{2m})_d$ and $\Gamma(O^{\mp}(2m,2))\cup\{0\}$.
\end{proof}

\subsection{The main result on the switching equivalence}
The purpose of this section is to prove the switching equivalence of the strongly regular graphs $NO^{\pm}(4m,2)$ and $NO^{\mp}(2m+1,4)$. For this, we will need to identify the vector spaces $\mathbb{F}_{2}^{4m}$ and $\mathbb{F}_{4}^{2m}$. We start with a technical lemmas.

\begin{lem} \label{lm:stars}
    Let $q=2^h$, and $V$ be an $\mathbb{F}_q$-linear space of dimension $2m$. We consider $V$ as a $2^{hm}$ dimensional $\mathbb{F}_2$-linear space at the same time. 
    \begin{enumerate}[(i)]
        \item Any symplectic $\mathbb{F}_q$-bilinear form $\langle .,. \rangle$ determines a symplectic $\mathbb{F}_2$-bilinear form $\langle .,. \rangle^*$ by
        \[\langle u,v \rangle^* = \Tr_{\mathbb{F}_q/\mathbb{F}_2}(\langle u,v \rangle).\]
        \item Similarly, if $\Theta:V\to \mathbb{F}_q$ is a quadratic form over $\mathbb{F}_q$, then
        \[\Theta^*(u)=\Tr_{\mathbb{F}_q/\mathbb{F}_2}(\Theta(u))\]
        is a quadratic form over $\mathbb{F}_2$. 
        \item If $\Theta$ linearizes to $\langle.,.\rangle$, then $\Theta^*$ linearizes to $\langle.,.\rangle^*$. In particular, $\Theta^*$ is non-singular if and only if $\Theta$ is non-singular.
        \item If $\Theta$ is non-singular, then $\Theta$ and $\Theta^*$ have the same type.
        \item Fix the symplectic forms $\langle .,. \rangle$ and $\langle .,. \rangle^* = \Tr_{\mathbb{F}_q/\mathbb{F}_2}(\langle .,. \rangle)$. Then $\Theta \mapsto \Theta^*$ is a $1$-to-$1$ correspondence between the $\mathbb{F}_q$-quadratic forms linearizing to $\langle .,. \rangle$ and the $\mathbb{F}_2$-quadratic forms linearizing to $\langle .,. \rangle^*$. 
    \end{enumerate}
\end{lem}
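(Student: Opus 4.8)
The plan is to dispose of (i)--(iii) and (v) by routine computation with the trace map, and to reduce the one substantial point, (iv), to a cardinality comparison.

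For (i), $\mathbb{F}_2$-bilinearity and the alternating property of $\langle .,.\rangle^*$ are inherited at once from $\langle .,.\rangle$ together with the $\mathbb{F}_2$-linearity of $\Tr_{\mathbb{F}_q/\mathbb{F}_2}$; the content is non-degeneracy. Given $0\neq u\in V$, non-degeneracy of $\langle .,.\rangle$ over $\mathbb{F}_q$ supplies $w$ with $\langle u,w\rangle\neq 0$, and then $\lambda\mapsto\langle u,\lambda w\rangle=\lambda\langle u,w\rangle$ sweeps all of $\mathbb{F}_q$; since $\Tr_{\mathbb{F}_q/\mathbb{F}_2}$ is surjective onto $\mathbb{F}_2$, some choice gives $\langle u,\lambda w\rangle^*=1$. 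For (ii) and (iii) at once: $\Theta^*(\lambda u)=\lambda^2\Theta^*(u)$ is trivial for $\lambda\in\mathbb{F}_2$, and
\[
\Theta^*(u+v)+\Theta^*(u)+\Theta^*(v)=\Tr_{\mathbb{F}_q/\mathbb{F}_2}\bigl(\Theta(u+v)+\Theta(u)+\Theta(v)\bigr)
\]
is $\mathbb{F}_2$-bilinear because the argument of the trace is, which shows simultaneously that $\Theta^*$ is a quadratic form and that it linearizes to $\langle .,.\rangle^*$ when $\Theta$ linearizes to $\langle .,.\rangle$. The non-singularity statement in (iii) then follows: in even dimension a quadratic form is non-degenerate precisely when its polar form is, and $\langle .,.\rangle^*$ is non-degenerate iff $\langle .,.\rangle$ is (one direction is (i); for the other, any nonzero radical vector of $\langle .,.\rangle$ is a radical vector of $\langle .,.\rangle^*$).

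For (v), well-definedness is contained in (iii). For injectivity, if $\Theta_1$ and $\Theta_2$ both linearize to $\langle .,.\rangle$ then, by the discussion in Section~\ref{sec:prelim}, $\Theta_1+\Theta_2=\langle c,\cdot\rangle^2$ for a unique $c\in V$; using $\Tr_{\mathbb{F}_q/\mathbb{F}_2}(x^2)=\Tr_{\mathbb{F}_q/\mathbb{F}_2}(x)$ one gets $\Theta_1^*=\Theta_2^*$ iff $\langle c,u\rangle^*=0$ for all $u$, i.e.\ $c=0$ by (i). Surjectivity is a counting point: the $\mathbb{F}_q$-quadratic forms linearizing to $\langle .,.\rangle$ form a coset of the $q^{2m}$-element space of squares of $\mathbb{F}_q$-linear functionals on $V$, whereas the $\mathbb{F}_2$-quadratic forms linearizing to $\langle .,.\rangle^*$ form a coset of the $2^{2mh}$-element space of $\mathbb{F}_2$-linear functionals on $V$, and $q^{2m}=2^{2mh}$, so an injection between finite sets of equal size is onto.

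Finally (iv), the crux. One inclusion is geometric: if $\Theta$ is hyperbolic over $\mathbb{F}_q$ it has an $\mathbb{F}_q$-totally singular subspace $U$ of $\mathbb{F}_q$-dimension $m$, whence $\Theta^*$ vanishes on $U$; but $U$ has $\mathbb{F}_2$-dimension $mh=\tfrac12\dim_{\mathbb{F}_2}V$, so the non-degenerate form $\Theta^*$ has maximal Witt index and is hyperbolic. The reverse inclusion I would read off from (v): the bijection $\Theta\mapsto\Theta^*$ sends hyperbolic forms over $\mathbb{F}_q$ into hyperbolic forms over $\mathbb{F}_2$, and by Theorem~\ref{th:structure_sp_2m_q}(ii) the two sets of hyperbolic forms have the same size $\tfrac12(q^{2m}+q^m)=\tfrac12(2^{2mh}+2^{mh})$, so the bijection matches them, hence also matches the two sets of elliptic forms; thus $\Theta$ is elliptic iff $\Theta^*$ is. Alternatively, one can count directly that $\{v\in V\mid\Tr_{\mathbb{F}_q/\mathbb{F}_2}(\Theta(v))=0\}$ has $\tfrac12(q^{2m}+\varepsilon q^m)$ elements, with $\varepsilon$ the type of $\Theta$, which is exactly the number of zeros of a type-$\varepsilon$ $\mathbb{F}_2$-quadratic form in dimension $2mh$. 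I expect (iv) to be the main obstacle: "hyperbolic stays hyperbolic" is elementary, but ruling out that an elliptic $\mathbb{F}_q$-form turns hyperbolic over $\mathbb{F}_2$ needs the global count, whether routed through (v) or done directly.
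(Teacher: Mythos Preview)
Your proof is correct. For (i)--(iii) and (v) you give more detail than the paper, which simply declares (i)--(iii) ``obvious'' and asserts injectivity in (v) without argument; your treatment via $\Theta_1+\Theta_2=\langle c,\cdot\rangle^2$ and non-degeneracy of $\langle.,.\rangle^*$ is cleaner.

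The one genuine difference is in (iv). The paper's proof is exactly your ``alternative'': it counts $|\{v:\Theta^*(v)=0\}|$ directly by summing $|\Theta^{-1}(a)|$ over the $q/2$ elements $a\in\mathbb{F}_q$ of absolute trace $0$, obtaining $\tfrac12(q^{2m}\pm q^m)=2^{2hm-1}\pm 2^{hm-1}$, and recognizing this as the zero count of an $\mathbb{F}_2$-form of the matching type. Your primary route---showing ``hyperbolic stays hyperbolic'' via the $mh$-dimensional totally singular $\mathbb{F}_2$-subspace, then invoking the bijection (v) together with the equality $|\Omega^+_{\mathbb{F}_q}|=|\Omega^+_{\mathbb{F}_2}|$ from Theorem~\ref{th:structure_sp_2m_q}(ii) to force ``elliptic stays elliptic''---is a genuinely different argument. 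It trades the explicit fibre count for a more structural observation (Witt index is visibly preserved upward), at the cost of depending on (v) and on Theorem~\ref{th:structure_sp_2m_q}(ii) applied at two different field levels. Both are short; the paper's is more self-contained, yours separates the easy direction from the hard one and makes clear why a global count is unavoidable for the latter.
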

\begin{proof} 
    (i), (ii) and (iii) are obvious. We show (iv) by counting the solutions $\Theta^*(x)=0$ with $x\in V$. We have
    \[|\Theta^{-1}(a)|=\begin{cases}
        q^{2m-1} \pm q^{m-1}(q-1) & \text{if $a=0$,} \\
        q^{2m-1} \mp q^{m-1} & \text{if $a\neq 0$.} 
    \end{cases}\]
    In a field $\mathbb{F}_q$ of even characteristic there are $\frac{q}{2}-1$ non-zero elements of absolute trace $0$. This implies 
    \begin{align*}
        |\{x\in V \mid \Tr_{\mathbb{F}_q/\mathbb{F}_2}(\Theta(x)) = 0 \}| &= q^{2m-1} \pm q^{m-1}(q-1)+\Big(\frac{q}{2}-1\Big)(q^{2m-1} \mp q^{m-1}) \\
        &=\frac{1}{2}(q^{2m}\pm q^m) \\
        &= 2^{2hm-1}\pm 2^{hm-1}.        
    \end{align*}
    This is precisely the number of solutions of a quadratic equation of the appropriate type, in a space of dimension $2hm$ over $\mathbb{F}_2$. (v) The sets of both kinds of quadratic forms have cardinalities $q^{2m}=2^{2hm}$. As $\Theta\mapsto \Theta^*$ is injective, (v) follows. 
\end{proof}

We are now able to state our main result. 

\begin{thm} \label{thm:two-graph-structure}
    \begin{enumerate}[(i)]
        \item The two-graph of $NO^\pm(2m,2)$ is isomorphic to $\mathcal{X}^\pm_{2m}$. 
        \item The two-graph of $NO^\mp(2m+1,4)$ is isomorphic to $\mathcal{X}^\pm_{4m}$. 
        \item The strongly regular polar graphs $NO^\pm(4m,2)$ and $NO^\mp(2m+1,4)$ are switching equivalent. 
    \end{enumerate}
\end{thm}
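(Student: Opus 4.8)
The plan is to establish the two-graph identifications (i) and (ii) first; once these are in place, part (iii) drops out of Proposition \ref{pr:switching-equiv-same}.

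For (i), I would read Propositions \ref{pr:anal1} and \ref{t-g} side by side. Proposition \ref{pr:anal1} realizes $NO^\pm(2m,2)$ as the graph on $X=\{a\in\mathbb{F}_2^{2m}\mid\Theta(a)=1\}$ with $a\sim b\iff\langle a,b\rangle=0$. For a triple $\{a,b,c\}$ of distinct vertices, the number of non-edges among them, read as an integer, is $\langle a,b\rangle+\langle a,c\rangle+\langle b,c\rangle$, so the induced subgraph has an odd number of edges precisely when $\langle a,b\rangle+\langle a,c\rangle+\langle b,c\rangle=0$ in $\mathbb{F}_2$, which is exactly the relation \eqref{eq:sympl-twograph} defining the triple set $T$ of Proposition \ref{t-g}. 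Since the vertex sets coincide, the associated two-graph is $(X,T)=\mathcal{X}^\pm_{2m}$.

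Part (ii) is the substantial step, and it is where $q=4$ becomes essential. I would take $\Theta$ a non-degenerate $\mathbb{F}_4$-quadratic form of type $\pm1$ on $\mathbb{F}_4^{2m}$ and use Proposition \ref{pr:anal2} to realize $NO^\mp(2m+1,4)$ as the graph on $V=\{a\mid\Tr_{\mathbb{F}_4/\mathbb{F}_2}(\Theta(a))=1\}$ with $a\sim b\iff\Theta(a+b)=\langle a,b\rangle^2$. Viewing $\mathbb{F}_4^{2m}$ as $\mathbb{F}_2^{4m}$ and passing to $\Theta^*=\Tr_{\mathbb{F}_4/\mathbb{F}_2}\circ\Theta$ and $\langle .,.\rangle^*=\Tr_{\mathbb{F}_4/\mathbb{F}_2}(\langle .,.\rangle)$ as in Lemma \ref{lm:stars}, the vertex set becomes $\{a\mid\Theta^*(a)=1\}$, and by Lemma \ref{lm:stars}(iv) the form $\Theta^*$ is non-degenerate of the same type $\pm1$; so by Proposition \ref{t-g} it remains only to show that the associated two-graph of $(V,E)$ is the set of triples with $\langle a,b\rangle^*+\langle a,c\rangle^*+\langle b,c\rangle^*=0$. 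The mechanism is the identity $t^2=t+\Tr_{\mathbb{F}_4/\mathbb{F}_2}(t)$ valid on $\mathbb{F}_4$: expanding $\Theta(a+b)=\Theta(a)+\Theta(b)+\langle a,b\rangle$, the edge condition becomes $\Theta(a)+\Theta(b)=\langle a,b\rangle+\langle a,b\rangle^2=\langle a,b\rangle^*$, an identity of elements of $\mathbb{F}_2$ (both sides lie in $\mathbb{F}_2$, because $a,b$ lie on the trace-$1$ locus, so $\Theta(a)+\Theta(b)\in\ker\Tr_{\mathbb{F}_4/\mathbb{F}_2}=\mathbb{F}_2$). Hence each pair-indicator equals $1+\Theta(a)+\Theta(b)+\langle a,b\rangle^*$ in $\mathbb{F}_2$; summing over the three pairs of $\{a,b,c\}$ the $\Theta$-contributions cancel in pairs and one is left with $1+\langle a,b\rangle^*+\langle a,c\rangle^*+\langle b,c\rangle^*$, so the triple carries an odd number of edges iff $\langle a,b\rangle^*+\langle a,c\rangle^*+\langle b,c\rangle^*=0$, as required. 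Therefore the two-graph of $NO^\mp(2m+1,4)$ is $\mathcal{X}^\pm_{4m}$.

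For (iii): applying (i) with $m$ replaced by $2m$ gives that the two-graph of $NO^\pm(4m,2)$ is $\mathcal{X}^\pm_{4m}$, while (ii) gives that the two-graph of $NO^\mp(2m+1,4)$ is also $\mathcal{X}^\pm_{4m}$; two graphs with the same associated two-graph are switching equivalent by Proposition \ref{pr:switching-equiv-same}, which is the claim. The main obstacle is part (ii): the $\mathbb{F}_4\leftrightarrow\mathbb{F}_2$ translation together with the sign/type bookkeeping under $\Theta\mapsto\Theta^*$, and above all the observation that the term $\langle a,b\rangle^2$ in the edge rule of Proposition \ref{pr:anal2} collapses to the $\mathbb{F}_2$-quantity $\langle a,b\rangle^*$ once one also invokes that vertices lie on the trace-$1$ locus. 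That collapse is precisely what forces the two families to share a two-graph, and it is special to $q=4$; parts (i) and (iii) are then purely formal.
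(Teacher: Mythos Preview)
Your proposal is correct and follows essentially the same route as the paper: for (i) you match the analytic model of Proposition \ref{pr:anal1} against Proposition \ref{t-g} and read off the triple condition \eqref{eq:sympl-twograph}; for (ii) you pass from $\Theta,\langle.,.\rangle$ over $\mathbb{F}_4$ to $\Theta^*,\langle.,.\rangle^*$ over $\mathbb{F}_2$ via Lemma \ref{lm:stars}, use $\langle a,b\rangle+\langle a,b\rangle^2=\langle a,b\rangle^*$ to rewrite the edge condition of Proposition \ref{pr:anal2} as an $\mathbb{F}_2$-identity, and sum over the three pairs so the $\Theta$-terms cancel; (iii) is then Proposition \ref{pr:switching-equiv-same}. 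The only cosmetic difference is that the paper sums the three non-edge indicators $\Theta(a+b)+\langle a,b\rangle^2$ directly, whereas you sum the complementary edge indicators $1+\Theta(a)+\Theta(b)+\langle a,b\rangle^*$; the two computations are equivalent.
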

\begin{proof}
    (i) We represent $NO^\pm(2m,2)$ and $\mathcal{X}^\pm_{2m}$ as in Proposition \ref{pr:anal1} and Theorem \ref{thm:two-graph-structure} using a quandratic form $\Theta^*$ of type $\pm$, which linearizes to the symplectic form $\langle .,. \rangle^*$ on $\mathbb{F}_2^{2m}$. Then, $NO^\pm(2m,2)$ and $\mathcal{X}^\pm_{2m}$ have the same set of vertices. In $NO^\pm(2m,2)$, two vertices $a$ and $b$ are adjacent if and only if $\langle a,b\rangle^*=0$, Clearly, for all $a,b\in V$, $\langle a,b\rangle^*\in\{0,1\}$. The triple $\{a,b,c\}$ is in the two-graph associated with $NO^{\pm}(2m,2)$ if and only if there is an odd number of adjacencies, that means there is an even number of 1's in $\{\langle a,b\rangle^*,\langle a,c\rangle^*,\langle a,b\rangle^*\}$. Equivalently,
    \[\langle a,b\rangle^* + \langle a,c\rangle^* + \langle a,b\rangle^*=0.\]
    This proves the claim.
    
    (ii) Let $q=4$, $V=\mathbb{F}_4^{2m}$, $\langle .,. \rangle$ and $\langle .,. \rangle^* = \Tr_{\mathbb{F}_q/\mathbb{F}_2}(\langle .,. \rangle)$ symplectic forms over $\mathbb{F}_4$ and $\mathbb{F}_2$, respectively. Let $\Theta$ be a quadratic form of type $\pm1$ over $\mathbb{F}_4$, which linearizes to $\langle .,. \rangle$. The quadratic form $\Theta^*(x)=\Tr_{\mathbb{F}_q/\mathbb{F}_2}(\Theta(x))$ over $\mathbb{F}_2$ has the same type $\pm1$. We represent $NO^\mp(2m+1,4)$ and $\mathcal{X}^\pm_{4m}$ as in Proposition \ref{pr:anal2} and Theorem \ref{thm:two-graph-structure}. Then $NO^\mp(2m+1,4)$ and $\mathcal{X}^\pm_{4m}$ have the same set of vertices
    \[X^\pm=\{a\in V \mid \Theta^*(a)=1\}.\]
    Moreover,
    \begin{align*}
        \text{$a,b$ are adjacent in $NO^\mp(2m+1,4)$} &\Longleftrightarrow \Theta(a+b)+\langle a,b\rangle^2 =0 \\
        &\Longleftrightarrow \Theta(a)+\Theta(b)+\langle a,b \rangle+\langle a,b\rangle^2 =0 \\
        &\Longleftrightarrow \Theta(a)+\Theta(b)+\langle a,b \rangle^* =0. 
    \end{align*}
    Since $\Tr_{\mathbb{F}_4/\mathbb{F}_2}(\Theta(a))=\Theta^*(a)=1$ and $\Tr_{\mathbb{F}_4/\mathbb{F}_2}(\Theta(b))=\Theta^*(b)=1$, we have
    \[\Tr_{\mathbb{F}_4/\mathbb{F}_2}(\Theta(a)+\Theta(b))=0,\]
    that is, $\Theta(a)+\Theta(b)\in \mathbb{F}_2$. This implies 
    \[\Theta(a+b)+\langle a,b\rangle^2 \in \mathbb{F}_2\]
    for all $a,b\in X^\pm$. In particular, the triple $\{a,b,c\}$ has an even number of non-edges if and only if
    \begin{align*} 
        \Theta(a+b)+\langle a,b\rangle^2 + \Theta(a+c)+\langle a,c\rangle^2 + \Theta(b+c)+\langle b,c\rangle^2 =0,
    \end{align*}
    or equivalently,
    \begin{align*}
        \langle a,b\rangle^* + \langle a,c\rangle^* + \langle b,c\rangle^* =0.
    \end{align*}
    This proves that the two-graph of $NO^\mp(2m+1,4)$ and $\mathcal{X}^\pm_{4m}$ consist of the same set of triples. 

    (iii) The claim follows from (i), (ii), and Proposition \ref{pr:switching-equiv-same}. 
\end{proof}

Theorem \ref{thm:two-graph-structure} shows that $NO^{\pm}(4m,2)$ and $NO^{\mp}(2m+1,4)$ have the same associated two-graph, whose descendant is $\Gamma(O^{\mp}(4m,2))$. Our last result will show the parameters of the Seidel switching between $NO^{\pm}(4m,2)$ and $NO^{\mp}(2m+1,4)$.

\begin{prop}
    The switching sets of the Seidel switching between $NO^{\pm}(4m,2)$ and $NO^{\mp}(2m+1,4)$ are
    \begin{align*}
        A &= \{a\in \mathbb{F}_4^{2m} \mid \Theta(a)=\lambda\}, \\
        B &= \{a\in \mathbb{F}_4^{2m} \mid \Theta(a)=\lambda+1\},
    \end{align*}
    where $\lambda \in \mathbb{F}_4\setminus \mathbb{F}_2$ is a root of $X^2+X+1=0$. The switching sets have size $2^{4m-2}\mp 2^{2m-2}$ and are regular of degree $2^{4m-3}\mp 2^{2m-2}-1$.
\end{prop}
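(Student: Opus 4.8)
The plan is to do everything inside the common model of the two graphs built in the proof of Theorem~\ref{thm:two-graph-structure}: both $NO^\pm(4m,2)$ and $NO^\mp(2m+1,4)$ are realized on the same vertex set $X^\pm=\{b\in\mathbb{F}_4^{2m}\mid\Theta^*(b)=1\}$, where $\Theta^*=\Tr_{\mathbb{F}_4/\mathbb{F}_2}\circ\Theta$, with adjacency $\langle a,b\rangle^*=0$ in the first graph and $\Theta(a)+\Theta(b)+\langle a,b\rangle^*=0$ in the second (here $\Theta(a)+\Theta(b)$ is a well-defined element of $\mathbb{F}_2$, since $\Theta(a),\Theta(b)\in\{\lambda,\lambda+1\}$ on $X^\pm$). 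Hence the symmetric difference of the two edge sets consists precisely of the pairs $\{a,b\}$ with $\Theta(a)\neq\Theta(b)$; as the elements of $\mathbb{F}_4$ of absolute trace $1$ are exactly $\lambda$ and $\lambda+1$, the vertex set splits as $X^\pm=A\sqcup B$ and the edge-symmetric difference is exactly the set of pairs crossing this partition. By the definition of Seidel switching (cf.\ Proposition~\ref{pr:switching-equiv-same}), switching with respect to $A$ (equivalently $B$) turns one graph into the other, so $A$ and $B$ are the switching sets. Their sizes then follow from Lemma~\ref{lm:stars}: $|A|=|B|=|\Theta^{-1}(\lambda)|=4^{2m-1}\mp 4^{m-1}=2^{4m-2}\mp 2^{2m-2}$, and $|A|+|B|=|X^\pm|$, as it must.

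Since switching does not touch edges inside $A$, the subgraph induced on $A$ is the same in $NO^\pm(4m,2)$ and in $NO^\mp(2m+1,4)$: it is the graph on $A$ with $a\sim b\iff\langle a,b\rangle^*=0$. Now $O^\pm(2m,4)=\mathrm{Stab}_{Sp(2m,4)}(\Theta)$ preserves $\langle.,.\rangle$, hence $\langle.,.\rangle^*$, and acts transitively on $A=\Theta^{-1}(\lambda)$ by Witt's theorem; so this induced subgraph is vertex-transitive, hence regular. (The same applies to $B$; alternatively, the coordinatewise Frobenius of $\mathbb{F}_4/\mathbb{F}_2$, composed with a suitable symplectic transformation when $\Theta$ is elliptic, is a graph automorphism of $NO^\pm(4m,2)$ interchanging $A$ and $B$.) So it remains to evaluate, for one fixed $a\in A$, the degree $|\{b\in A\mid\langle a,b\rangle^*=0\}|-1$.

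I would compute this by splitting according to $\langle a,b\rangle\in\{0,1\}$ (the two elements of $\mathbb{F}_4$ with vanishing trace). If $\langle a,b\rangle=0$, i.e.\ $b\in a^\perp$, then $\Theta(b+ta)=\Theta(b)+t^2\lambda$ for $t\in\mathbb{F}_4$, and since $t\mapsto t^2\lambda$ permutes $\mathbb{F}_4$, $\Theta$ takes the value $\lambda$ exactly once on each $\langle a\rangle_{\mathbb{F}_4}$-coset of $a^\perp$; this contributes $|a^\perp/\langle a\rangle|=4^{2m-2}$ vectors $b$, one of which is $a$ itself. For $\langle a,b\rangle=1$, choose $a^*$ with $\langle a,a^*\rangle=1$ and put $W=\{a,a^*\}^\perp$, a nondegenerate symplectic $\mathbb{F}_4$-space of dimension $2(m-1)$ with $\mathbb{F}_4^{2m}=\langle a,a^*\rangle\perp W$; writing $b=\alpha a+a^*+w$ ($\alpha\in\mathbb{F}_4$, $w\in W$) gives $\Theta(b)=\alpha^2\lambda+\alpha+\mu+\Theta(w)$ with $\mu=\Theta(a^*)$, so the count is $\sum_{\alpha\in\mathbb{F}_4}|\{w\in W\mid\Theta(w)=\lambda+\mu+\alpha+\alpha^2\lambda\}|$, and using $\lambda^3=1$ the four arguments turn out to be $\lambda+\mu$ and $1+\mu$, each occurring twice. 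Finally the type of $W$ is the product of the type of $\Theta$ and that of the plane $\langle a,a^*\rangle$ (hyperbolic if $a^*$ can be chosen isotropic, elliptic otherwise), while $\mu$ matters only modulo the $2$-element set $\{t^2\lambda+t\mid t\in\mathbb{F}_4\}$, leaving just a couple of cases; feeding the value counts of Lemma~\ref{lm:stars} into each one yields $2^{4m-4}\mp 2^{2m-2}$ uniformly. Adding the two contributions and subtracting $1$ for $b=a$ gives $4^{2m-2}+2^{4m-4}\mp 2^{2m-2}-1=2^{4m-3}\mp 2^{2m-2}-1$.

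I expect the $\langle a,b\rangle=1$ count to be the only delicate point, precisely because the type of the auxiliary space $W$ and the residue of $\mu$ genuinely vary with the (forced) choices, and one must see that the short case analysis always collapses to the same number. A way to sidestep the case split is to evaluate $|A\cap H|-|B\cap H|$ for $H=\{b\mid\langle a,b\rangle^*=0\}$ as a Weil sum: on $X^\pm$ one has $(-1)^{[\,\Theta(b)=\lambda+1\,]}=(-1)^{\Tr_{\mathbb{F}_4/\mathbb{F}_2}((\lambda+1)\Theta(b))}$, and both $\mathbb{F}_2$-quadratic forms $b\mapsto\Tr(\nu\Theta(b))$, $\nu\in\{\lambda,\lambda+1\}$, have type $\pm$ since $\nu\Theta$ has the same zero locus as $\Theta$ (so Lemma~\ref{lm:stars}(iv) applies); restricting their character sums to the hyperplane $H$ by the standard completion-of-the-square identity, with $\nu^{-1}a$ as the vector representing the functional $\langle a,\cdot\rangle^*$, one gets $|A\cap H|-|B\cap H|=\mp 2^{2m-1}$, hence $|A\cap H|=2^{4m-3}\mp 2^{2m-2}$ directly.
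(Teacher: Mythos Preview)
Your identification of the switching sets $A,B$ is essentially the paper's argument: compare the two adjacency formulas on the common vertex set $X^\pm$ and observe that they agree iff $\Theta(a)=\Theta(b)$, while $\Theta^*(a)=1$ forces $\Theta(a)\in\{\lambda,\lambda+1\}$. Your size computation via $|\Theta^{-1}(\lambda)|$ from Lemma~\ref{lm:stars} is also fine and equivalent to the paper's.

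The real difference is in the regularity and degree of the induced subgraph on $A$. The paper does not compute this at all: it quotes the general structural fact \cite[Proposition~1.1.2]{brvan} that when a Seidel switching takes one strongly regular graph to another, the switching set has size $v/2$ and is regular of degree $k-\mu=k'-\mu'$; plugging in the parameters from Lemmas~\ref{lm:no-2m-2} and~\ref{lm:no-2m+1-q} gives $2^{4m-3}\mp 2^{2m-2}-1$ in one line. Your route is instead a direct count: transitivity of $O^\pm(2m,4)$ on $A$ (Witt) for regularity, then splitting over $\langle a,b\rangle\in\{0,1\}$ and using the orthogonal decomposition $\mathbb{F}_4^{2m}=\langle a,a^*\rangle\perp W$ to reduce to value counts of $\Theta|_W$. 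This is correct (the four cases for $\mu=\Theta(a^*)$ and the corresponding type of $W$ indeed collapse to $4^{2m-2}-\varepsilon\,4^{m-1}$ in every case), and it has the virtue of being self-contained, with no appeal to an external black box about switching between SRGs. The trade-off is that your argument is substantially longer and needs the small case analysis you flagged; the paper's is a one-line citation. Your character-sum alternative is a reasonable shortcut but, as written, is only sketched; the paper's method avoids that machinery entirely.
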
 
\begin{proof}
    Let $\Theta$ and $\Theta^*$ be defined as in Theorem \ref{thm:two-graph-structure}. The vertices $a,b$ have the same adjacencies in both graphs if and only if $\Theta(a)=\Theta(b)$. $\Theta^*(a)=\Theta^*(b)=1$ implies $\Theta(a),\Theta(b)\in \{\lambda,\lambda+1\}$. Therefore, the switching sets are the sets $A$, $B$ as given.    Both graphs have $v=2^{4m-1}\mp 2^{2m-1}$ vertices. The degrees and the numbers of common neighbors of non-adjacent vertices are $k=2^{4m-2}-1$, $\mu=2^{4m-3}\pm 2^{2m-2}$, and $k'=2^{4m-2}\mp 3\cdot 2^{2m-2}-1$, $\mu'=2^{4m-3}\mp 2^{2m-1}$, respectively. By \cite[Proposition 1.1.2]{brvan}, the switching sets have size $v/2$ and are regular of degree $d=k-\mu=k'-\mu'$.
\end{proof}

\section{Acknowledgement}
We thank Andries E. Brouwer and Gábor Korchmáros for stimulating discussion and valuable remarks. The research of the first author was supported by project TKP2021-NVA-09, implemented with the support provided by the Ministry of Innovation and Technology of Hungary from the National Research, Development and Innovation Fund, financed under the TKP2021-NVA funding scheme. Partially supported by the NKFIH-OTKA Grant SNN 132625. The research of the last author was partially supported by the Italian National Group for Algebraic and Geometric Structures and their Applications (GNSAGA - INdAM) and by the INdAM - GNSAGA Project \emph{Tensors over finite fields and their applications}, number E53C23001670001.

\appendix

\section{Symplectic groups in even characteristic}\label{sec:appendix}

The theorems of Section \ref{sec:prelim} are well known, but it is hard to find a concise proof for them in the literature. The purpose of the Appendix is to give the most important steps of the proofs of Theorem \ref{th:structure_sp_2m_q}, \ref{th:action_sp_2m_2} and \ref{th:symple-complements}. We use the terminology and notation of Section \ref{sec:prelim}. 

\begin{lem} \label{lm:qforms-equiv}
    Let $\Theta_1,\Theta_2\in\Omega$ be non-degenerate quadratic forms linearizing to the same symplectic form $\langle .,. \rangle$. Then $\Theta_1,\Theta_2\in\Omega$ are $GL(2m,q)$-equivalent if and only if they are $Sp(2m,q)$-equivalent.
\end{lem}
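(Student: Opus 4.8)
The plan is to establish the equivalence in the non-trivial direction: if $\Theta_1, \Theta_2 \in \Omega$ are non-degenerate quadratic forms both linearizing to $\langle .,. \rangle$ and there exists $g \in GL(2m,q)$ with $\Theta_1^g = \Theta_2$, then there is in fact such a $g$ inside $Sp(2m,q)$. The reverse implication is vacuous since $Sp(2m,q) \le GL(2m,q)$. So the content is entirely in upgrading a general linear equivalence to a symplectic one.

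First I would observe that since $\Theta_1$ and $\Theta_2$ linearize to the \emph{same} form $\langle .,. \rangle$, their difference $\Theta_1 + \Theta_2$ is additive and satisfies $(\Theta_1+\Theta_2)(\lambda u) = \lambda^2(\Theta_1+\Theta_2)(u)$ — exactly the argument already used in Section~\ref{sec:prelim} to show $\Omega = \{\vartheta_a \mid a \in \mathbb{F}_q^{2m}\}$. Hence fixing $\Theta_1$ as a basepoint, every element of $\Omega$ is $\Theta_1 + \ell^2$ for a linear functional $\ell$, equivalently $\vartheta_a$-type translates; concretely $\Theta_2 = \Theta_1 + \langle c, \cdot \rangle^2$ for some $c \in \mathbb{F}_q^{2m}$ (using non-degeneracy of $\langle .,. \rangle$ to realize the functional via $c$). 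Next, any $g \in GL(2m,q)$ with $\Theta_1^g = \Theta_2$ must send $\langle .,. \rangle$, the bilinear form associated to $\Theta_1$, to the bilinear form associated to $\Theta_2$ — but both associated forms are $\langle .,. \rangle$ itself. Therefore $g$ already preserves $\langle .,. \rangle$ up to the action on the linearization: more precisely, linearizing $\Theta_1^g = \Theta_2$ gives $\langle ug^{-1}, vg^{-1}\rangle = \langle u,v\rangle$ for all $u,v$, i.e. $g \in Sp(2m,q)$ automatically. That is the crux: the bilinear form attached to a quadratic form is a covariant of it, so a linear map carrying $\Theta_1$ to $\Theta_2$ carries their linearizations to each other, and since those linearizations coincide, $g$ is symplectic.

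I would then write the proof essentially as: assume $\Theta_1^g = \Theta_2$ with $g \in GL(2m,q)$; for all $u,v \in V$,
\[
\langle ug^{-1}, vg^{-1}\rangle = \Theta_1(ug^{-1}+vg^{-1}) + \Theta_1(ug^{-1}) + \Theta_1(vg^{-1}) = \Theta_2(u+v)+\Theta_2(u)+\Theta_2(v) = \langle u,v\rangle,
\]
where the middle equality is the definition of $\Theta_1^g$ evaluated at $u+v, u, v$, and the outer equalities are the linearization identities for $\Theta_1$ and $\Theta_2$. Replacing $u,v$ by $ug, vg$ shows $\langle ug,vg\rangle = \langle u,v\rangle$ for all $u,v$, so $g \in Sp(2m,q)$, and of course $g$ still realizes $\Theta_1^g = \Theta_2$. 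The converse is immediate.

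The statement is genuinely soft — there is no obstacle to speak of, only the need to be careful that "linearizes to $\langle .,. \rangle$" is being used for \emph{both} forms with the \emph{same} target bilinear form, which is exactly the hypothesis $\Theta_1, \Theta_2 \in \Omega$. The only place to watch is notational: the paper defines $\Theta^A(u) = \Theta(uA^{-1})$, so one must keep track of $g$ versus $g^{-1}$ consistently, but this is bookkeeping rather than mathematics. If one wanted the lemma phrased for equivalence classes rather than a fixed ambient $\langle .,. \rangle$, one would first use Witt's theorem to move a general $GL$-equivalence to one fixing the common linearization, but that detour is unnecessary here since membership in $\Omega$ already pins the linearization down.
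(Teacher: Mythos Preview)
Your proof is correct and follows essentially the same approach as the paper: linearize the equation $\Theta_1^g = \Theta_2$ to conclude directly that $g$ preserves $\langle .,. \rangle$, hence lies in $Sp(2m,q)$. The paragraph expressing $\Theta_2 = \Theta_1 + \langle c,\cdot\rangle^2$ is not needed for the argument and can be dropped, but the core computation matches the paper's proof line for line (up to the bookkeeping of $g$ versus $g^{-1}$ that you already flagged).
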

\begin{proof}
    Since $Sp(2m,q)\leq GL(2m,q)$, we only need to prove the right implication. Let $A\in GL(2m,q)$. If $\Theta_1^A=\Theta_2$, then for all $u,v\in V$, 
    \begin{align*}
        \langle u,v\rangle & =\Theta_1(u+v)+\Theta_1(u)+\Theta_1(v) \\
        &=\Theta_2((u+v)A)+\Theta_2(uA)+\Theta_2(vA) \\
        & =\langle uA,vA\rangle.
    \end{align*}
    Hence $A\in Sp(2m,q)$.
\end{proof}

\begin{lem}
The orders of the orthogonal groups are given by the formulas
\begin{align*}
    |O(2m+1,q)| &= q^{m^2}\prod_{i=1}^m(q^{2i}-1), \\
    |O^\pm(2m,q)| &= 2 q^{m(m-1)} (q^m\mp 1)\prod_{i=1}^{m-1}(q^{2i}-1).\\
\end{align*}
\end{lem}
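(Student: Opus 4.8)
The plan is to reduce all three formulas to the single computation of $|Sp(2m,q)|$, exploiting the two structural facts already available: the isomorphism $O(2m+1,q)\cong Sp(2m,q)$ recalled in Section \ref{sec:prelim}, and Theorem \ref{th:structure_sp_2m_q}, which realizes $O^\pm(2m,q)$ as a point stabilizer of the $Sp(2m,q)$-action on $\Omega^\pm$. Once $|Sp(2m,q)|$ is known, the odd case is an identity and the two even cases are orbit-stabilizer counts.

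First I would establish $|Sp(2m,q)| = q^{m^2}\prod_{i=1}^m(q^{2i}-1)$ by a recursion on $m$. The symplectic group acts transitively on the set of hyperbolic pairs $(u,v)$ with $\langle u,v\rangle = 1$, since any such pair extends to a symplectic basis and two symplectic bases differ by a symplectic transformation. Counting the pairs gives $q^{2m}-1$ choices for $u\neq 0$ and, because $\langle u,\cdot\rangle$ is a nonzero functional, $q^{2m-1}$ choices for $v$. The stabilizer of $(u,v)$ acts symplectically on the $(2m-2)$-dimensional space $\{u,v\}^\perp$, and every symplectic transformation there extends uniquely while fixing $u,v$; hence the stabilizer is $Sp(2m-2,q)$. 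Orbit-stabilizer then yields $|Sp(2m,q)| = (q^{2m}-1)q^{2m-1}\,|Sp(2m-2,q)|$, and the product telescopes using $\sum_{i=1}^m(2i-1)=m^2$ with base case $|Sp(0,q)|=1$.

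The odd-dimensional formula is immediate: $|O(2m+1,q)| = |Sp(2m,q)| = q^{m^2}\prod_{i=1}^m(q^{2i}-1)$ by the isomorphism of Section \ref{sec:prelim}. For the even-dimensional formulas, observe (exactly as in Lemma \ref{lm:qforms-equiv}) that any $A$ preserving a non-degenerate $\Theta\in\Omega$ automatically preserves $\langle .,.\rangle$, so $O^\pm(2m,q)$ coincides with the stabilizer $R^\pm$ of $\Theta\in\Omega^\pm$ inside $Sp(2m,q)$. Since $Sp(2m,q)$ is transitive on $\Omega^\pm$ by Theorem \ref{th:structure_sp_2m_q}(i), orbit-stabilizer gives $|O^\pm(2m,q)| = |Sp(2m,q)|/|\Omega^\pm|$, where $|\Omega^\pm| = \tfrac12 q^m(q^m\pm 1)$ by Theorem \ref{th:structure_sp_2m_q}(ii). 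Substituting the value of $|Sp(2m,q)|$, factoring $q^{2m}-1=(q^m-1)(q^m+1)$ out of the product, and cancelling the factor $q^m\pm 1$ produces $2q^{m(m-1)}(q^m\mp 1)\prod_{i=1}^{m-1}(q^{2i}-1)$, as claimed.

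I expect no serious obstacle: the only nontrivial input is the transitivity of $Sp(2m,q)$ on hyperbolic pairs (equivalently, the homogeneity of symplectic bases), which in characteristic two is as clean as in any other characteristic. The remaining work is arithmetic simplification, where the main care is to track the sign conventions correctly, so that type $+1$ (with $|\Omega^+|=\tfrac12 q^m(q^m+1)$) yields the factor $q^m-1$ and type $-1$ yields the factor $q^m+1$.
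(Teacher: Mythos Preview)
Your argument is correct and self-contained, but it is a genuinely different route from the paper's own proof, which consists solely of a citation: the paper simply invokes Theorems 3.12, 14.2 and 14.48 of Grove's \emph{Classical Groups and Geometric Algebra}. By contrast, you derive $|Sp(2m,q)|$ directly via the standard hyperbolic-pair recursion and then obtain the even-dimensional orders from the orbit--stabilizer theorem applied to the action on $\Omega^\pm$, using Theorem \ref{th:structure_sp_2m_q}(i)--(ii). Each approach has its merits: the paper's is shorter and keeps the appendix focused on the pieces that are genuinely hard to find in the literature, while yours is fully internal to the paper's setup and makes the structural link between the index $[Sp(2m,q):O^\pm(2m,q)]$ and the orbit sizes $|\Omega^\pm|$ explicit.

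One organizational caveat: in the paper this lemma appears \emph{before} the proof of Theorem \ref{th:structure_sp_2m_q} in the Appendix, so your proof formally appeals to a result whose proof comes later. This is not a genuine circularity---the paper's proof of Theorem \ref{th:structure_sp_2m_q} uses only Lemmas \ref{lm:qforms-equiv} and \ref{lm:traction-omega} and a direct count, none of which depend on the order formulas---but if you were to insert your argument verbatim you would want either to reorder the Appendix or to flag explicitly that the forward reference is harmless.
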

\begin{proof}
    This follows from Theorems 3.12, 14.2 and 14.48 of \cite{grove}. 
\end{proof}

Witt's Extension Theorem \cite[Theorem 5.2]{grove} implies the following.
\begin{lem} \label{lm:orth_orbit}
    Let $\Theta$ be a quadratic form of type $\varepsilon$ on $V=\mathbb{F}_q^n$. Let $a,b\in V$ be two non-zero vectors. There is an element $A\in O^\varepsilon(n,q)$ if and only if $\Theta(a)=\Theta(b)$. \qed
\end{lem}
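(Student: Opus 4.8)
The plan is to read the statement as it is intended — for non-zero $a,b\in V$ there exists $A\in O^\varepsilon(n,q)$ with $aA=b$ if and only if $\Theta(a)=\Theta(b)$ — and to deduce it directly from Witt's Extension Theorem, which is exactly the cited tool. The forward implication is immediate and requires no machinery: by the very definition of the orthogonal group every $A\in O^\varepsilon(n,q)$ satisfies $\Theta(uA)=\Theta(u)$ for all $u\in V$, so $aA=b$ forces $\Theta(b)=\Theta(aA)=\Theta(a)$.

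For the converse I would assume $\Theta(a)=\Theta(b)$ and build an isometry on small subspaces that Witt's theorem can then extend. Consider the one-dimensional subspaces $\langle a\rangle$ and $\langle b\rangle$ and the linear map $\sigma\colon\langle a\rangle\to\langle b\rangle$ determined by $a\mapsto b$. On a one-dimensional subspace the alternating form $\langle .,. \rangle$ vanishes identically (since $\langle a,a\rangle=0$), so there is no bilinear-form condition to check, and the only requirement for $\sigma$ to be an \emph{isometry} of quadratic subspaces is that it preserve $\Theta$. Using homogeneity of degree two we get $\Theta(\sigma(\lambda a))=\Theta(\lambda b)=\lambda^2\Theta(b)=\lambda^2\Theta(a)=\Theta(\lambda a)$ for every $\lambda\in\mathbb{F}_q$, so $\sigma$ is indeed an isometry. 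Witt's Extension Theorem \cite[Theorem 5.2]{grove} then extends $\sigma$ to an isometry $A$ of the whole space $(V,\Theta)$; by definition of the orthogonal group $A\in O^\varepsilon(n,q)$, and $aA=b$, as required.

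The subtlety, and the reason we lean on Grove's formulation rather than a naive argument through the associated bilinear form, is that we are in characteristic two with a genuine quadratic form. Witt's theorem for symmetric bilinear forms can fail in this setting, but the version for quadratic forms — which tolerates a degenerate associated form, exactly what occurs in the odd-dimensional case $n=2m+1$ with its one-dimensional radical — remains valid and is precisely \cite[Theorem 5.2]{grove}. I expect this to be the only genuine point of care: one must invoke the quadratic-form version of Witt and observe that it applies uniformly whether $a$ is isotropic ($\Theta(a)=0$) or anisotropic ($\Theta(a)\neq 0$), so that no separate treatment of the two cases is needed. The hypothesis $a,b\neq 0$ is what makes $\sigma$ a well-defined injective map of one-dimensional subspaces, which is all the extension theorem requires.
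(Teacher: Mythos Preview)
Your proposal is correct and follows exactly the paper's approach: the paper simply prefaces the lemma with ``Witt's Extension Theorem \cite[Theorem 5.2]{grove} implies the following'' and appends a \qedsymbol, so the entire intended argument is the citation to Witt. You have supplied the routine details of that application (the trivial forward direction and the construction of the one-dimensional isometry to be extended), which is precisely what the paper leaves implicit.
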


For a finite field of characteristic two, we have the group isomorphism $O(2m+1,q)\cong Sp(2m,q)$. If $n=2m$ is even, then the alternating form associated to a non-singular quadratic form is non-singular. This embeds $O^\varepsilon(2m,q)$ into $Sp(2m,q)$. In fact, in even dimension, the orthogonal groups are maximal subgroups of the symplectic group. Their index is
\[|O(2m+1,q):O^\pm(2m,q)| = \frac{1}{2} q^m \frac{q^{2m}-1}{q^m\mp 1} = \frac{1}{2}q^m(q^m\pm 1).\]

\subsection{Symplectic transvections}
We introduce now the subset of symplectic transvections, by defining
\begin{equation*}
  \begin{array}{lccc}
  T_a: & V & \longrightarrow & V \\
 & u & \mapsto & u+\langle u,a\rangle a,\\
\end{array}
\end{equation*}
see \cite{grove} for more information about transvections. The following lemma holds:
\begin{lem} \label{lm:transvections}
    \begin{enumerate}[(i)]
        \item $T_a^2=1$, $\forall a\in V$;
        \item $u$ is fixed by $T_a$ if and only if $u\in a^{\perp}$;
        \item $A^{-1}T_a A=T_{aA}$, for any row vector $a\in V$ and symplectic matrix $A\in Sp(2m,q)$. 
    \end{enumerate}
\end{lem}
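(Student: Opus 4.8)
The statement to prove is Lemma~\ref{lm:transvections}, concerning the basic properties of symplectic transvections.

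My plan is to verify each of the three items by direct computation, using only the definition $T_a(u) = u + \langle u,a\rangle a$ and the fact that $\langle .,. \rangle$ is an alternating bilinear form (so $\langle a,a\rangle = 0$ in any characteristic, but this is automatic here since we are in characteristic two). First, for item (i), I would compute $T_a(T_a(u))$ by substituting $v = u + \langle u,a\rangle a$ into the formula and expanding: this gives $v + \langle v,a\rangle a = u + \langle u,a\rangle a + \langle u + \langle u,a\rangle a, a\rangle a$. Using bilinearity, $\langle u + \langle u,a\rangle a, a\rangle = \langle u,a\rangle + \langle u,a\rangle \langle a,a\rangle = \langle u,a\rangle$ since $\langle a,a\rangle = 0$, so the whole expression collapses to $u + \langle u,a\rangle a + \langle u,a\rangle a = u$ because we are in characteristic two (the two copies of $\langle u,a\rangle a$ cancel). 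Hence $T_a^2 = 1$.

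For item (ii), the fixed point condition $T_a(u) = u$ reads $u + \langle u,a\rangle a = u$, i.e. $\langle u,a\rangle a = 0$. Since $a \neq 0$ (the case $a = 0$ gives $T_a = \mathrm{id}$ trivially and $0^\perp = V$), this holds if and only if the scalar $\langle u,a\rangle = 0$, which is precisely the condition $u \in a^\perp$. For item (iii), I would check that $A^{-1}T_a A$ and $T_{aA}$ agree on every row vector $u$. Applying the left side: $u \mapsto uA \mapsto uA + \langle uA, a\rangle a \mapsto (uA + \langle uA,a\rangle a)A^{-1} = u + \langle uA,a\rangle aA^{-1}$. Now since $A$ is symplectic, $\langle uA, a\rangle = \langle uA, (aA^{-1})A\rangle = \langle u, aA^{-1}\rangle$, so this equals $u + \langle u, aA^{-1}\rangle aA^{-1} = T_{aA^{-1}}(u)$; adjusting the indexing convention (replacing $a$ by $aA$ throughout, or equivalently noting the paper's conjugation convention $A^{-1}T_aA = T_{aA}$ matches once one tracks whether vectors multiply on the left by $A$ or $A^{-1}$), the identity follows. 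I should be careful to state the convention consistently with how the paper writes the symplectic action $u \mapsto uA$.

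None of these steps presents a genuine obstacle; the lemma is routine. The only point requiring minor care is the bookkeeping in item (iii): making sure the direction of the conjugation and the side on which $A$ acts are consistent with the paper's earlier conventions (row vectors, $\langle uA, vA\rangle = \langle u,v\rangle$, and $\Theta^A(u) = \Theta(uA^{-1})$), so that the formula comes out as $A^{-1}T_aA = T_{aA}$ rather than $T_{aA^{-1}}$. I would present item (iii) by evaluating both sides on an arbitrary $u$ and invoking the symplectic invariance of $\langle .,. \rangle$ to match the scalar coefficients, which is the cleanest way to avoid sign- or inverse-tracking errors.
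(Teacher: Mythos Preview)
The paper gives no proof for this lemma; it is stated as a routine fact (with a pointer to \cite{grove} for background on transvections), so there is no ``paper's proof'' to compare against. Your direct computations for (i) and (ii) are exactly the standard verification and are correct.

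For (iii), your computation is reversed relative to the paper's conventions, and the hand-wave ``adjusting the indexing convention'' hides that you actually proved the wrong identity. The paper uses row vectors with a right action, so $u(A^{-1}T_aA)$ means: first apply $A^{-1}$, then $T_a$, then $A$. Carrying this out,
\[
u \;\longmapsto\; uA^{-1} \;\longmapsto\; uA^{-1} + \langle uA^{-1},a\rangle\,a \;\longmapsto\; u + \langle uA^{-1},a\rangle\, aA.
\]
Now symplecticity $\langle xA,yA\rangle=\langle x,y\rangle$ with $x=uA^{-1}$, $y=a$ gives $\langle uA^{-1},a\rangle=\langle u,aA\rangle$, so the result is $u+\langle u,aA\rangle\,aA = uT_{aA}$, exactly as claimed. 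Your version applied $A$ first and produced $T_{aA^{-1}}$, which is the formula for $AT_aA^{-1}$, not $A^{-1}T_aA$. This is not a genuine mathematical gap---the argument is trivially repaired---but you should present the computation in the correct order rather than invoking an unspecified ``adjustment''.
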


\begin{lem} \label{lm:traction-formulas}
Let $a,b,c\in V$,  $\gamma\in \mathbb{F}^*_{q}$. The following hold:
    \begin{align}
        \vartheta_a^{T_c}(u)&=\vartheta_a(u)+\langle c,u\rangle^2(\vartheta_a(c)+1), \label{eq:theta1}\\
        \vartheta_b(u)&=\vartheta_a(u)+\langle a+b,u\rangle^2, \label{eq:theta2}\\
        \vartheta_a(a+b)&=\vartheta_0(a)+\vartheta_0(b)+\langle a,b\rangle^2+\langle a,b\rangle, \label{eq:theta3}\\
        \vartheta_a^{T_{\gamma(a+b)}}(u)&=\vartheta_b(u)+(1+\gamma^2+\gamma^4\vartheta_a(a+b))\langle a+b,u\rangle^2, \label{eq:theta4}\\
        1+\gamma^2+\gamma^4\vartheta_a(a+b)&=\gamma^4(\frac{1}{\gamma^4}+\langle a,b\rangle^2+\frac{1}{\gamma^{2}}+\langle a,b\rangle+\vartheta_0(a)+\vartheta_0(b)). \label{eq:theta5}
    \end{align}
\end{lem}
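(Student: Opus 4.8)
The plan is to establish the five identities in order, each a short computation from the definitions, feeding the earlier ones into the later ones. Throughout I would use that every $\vartheta_a \in \Omega$ linearizes to $\langle .,. \rangle$ (so $\vartheta_a(u+v) = \vartheta_a(u) + \vartheta_a(v) + \langle u,v \rangle$), that $\vartheta_a(\lambda w) = \lambda^2 \vartheta_a(w)$, that $\langle .,. \rangle$ is bilinear and alternating (hence symmetric, with $\langle w,w \rangle = 0$), and that squaring is additive in characteristic two. I also recall from Lemma \ref{lm:transvections}(i) that $T_c$ is an involution, so that $\vartheta_a^{T_c}(u) = \vartheta_a(uT_c^{-1}) = \vartheta_a(uT_c) = \vartheta_a\!\big(u + \langle u,c \rangle c\big)$.

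First, for \eqref{eq:theta1} I would expand $\vartheta_a\!\big(u + \langle u,c \rangle c\big)$ by the linearization property: it equals $\vartheta_a(u) + \vartheta_a(\langle u,c \rangle c) + \langle u, \langle u,c \rangle c \rangle = \vartheta_a(u) + \langle u,c \rangle^2 \vartheta_a(c) + \langle u,c \rangle^2$, and then collect the last two terms and use $\langle u,c \rangle = \langle c,u \rangle$. Identity \eqref{eq:theta2} follows at once by subtracting the defining formulas $\vartheta_a(u) = \vartheta_0(u) + \langle a,u \rangle^2$ and $\vartheta_b(u) = \vartheta_0(u) + \langle b,u \rangle^2$ and using bilinearity together with $(\langle a,u \rangle + \langle b,u \rangle)^2 = \langle a,u \rangle^2 + \langle b,u \rangle^2$. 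For \eqref{eq:theta3}, I would write $\vartheta_a(a+b) = \vartheta_0(a+b) + \langle a, a+b \rangle^2$; since $\vartheta_0$ linearizes to $\langle .,. \rangle$ we get $\vartheta_0(a+b) = \vartheta_0(a) + \vartheta_0(b) + \langle a,b \rangle$, while $\langle a, a+b \rangle = \langle a,b \rangle$ because $\langle a,a \rangle = 0$; combining these gives the claim.

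Identity \eqref{eq:theta4} is obtained by specializing \eqref{eq:theta1} to $c = \gamma(a+b)$: there $\langle c,u \rangle^2 = \gamma^2 \langle a+b,u \rangle^2$ and $\vartheta_a(c) = \gamma^2 \vartheta_a(a+b)$, so \eqref{eq:theta1} reads $\vartheta_a^{T_c}(u) = \vartheta_a(u) + \big(\gamma^4 \vartheta_a(a+b) + \gamma^2\big)\langle a+b,u \rangle^2$; now I replace $\vartheta_a(u)$ by $\vartheta_b(u) + \langle a+b,u \rangle^2$ using \eqref{eq:theta2} and collect the coefficient of $\langle a+b,u \rangle^2$. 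Finally, \eqref{eq:theta5} is pure bookkeeping: substitute \eqref{eq:theta3} for $\vartheta_a(a+b)$ in the left-hand side and factor out $\gamma^4$, which is legitimate since $\gamma \in \mathbb{F}^*_q$.

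There is no genuine obstacle here; the only points requiring care are the bookkeeping with the conventions — that $\Theta^A(u) = \Theta(uA^{-1})$ combined with $T_c^{-1} = T_c$, that $T_c$ acts on the right of row vectors, and the symmetry $\langle u,c \rangle = \langle c,u \rangle$ in characteristic two — and the fact that \eqref{eq:theta5} needs $\gamma \neq 0$, which is exactly the hypothesis $\gamma \in \mathbb{F}^*_q$.
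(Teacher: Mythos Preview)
Your proof is correct and follows essentially the same route as the paper's: both expand $\vartheta_a^{T_c}(u)=\vartheta_a(u+\langle u,c\rangle c)$ via linearization for \eqref{eq:theta1}, derive \eqref{eq:theta2} from the defining formula $\vartheta_a=\vartheta_0+\langle a,\cdot\rangle^2$, obtain \eqref{eq:theta4} by specializing \eqref{eq:theta1} to $c=\gamma(a+b)$ and then substituting \eqref{eq:theta2}, and get \eqref{eq:theta5} by inserting \eqref{eq:theta3} and factoring out $\gamma^4$. The only cosmetic difference is in \eqref{eq:theta3}: the paper first linearizes $\vartheta_a(a+b)=\vartheta_a(a)+\vartheta_a(b)+\langle a,b\rangle$ and then applies the definition of $\vartheta_a$, whereas you first apply the definition $\vartheta_a(a+b)=\vartheta_0(a+b)+\langle a,a+b\rangle^2$ and then linearize $\vartheta_0$; both yield the same two-line computation.
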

\begin{proof}
\begin{align*}
& \text{For \eqref{eq:theta1}:} & \vartheta_a^{T_c}(u)&=\vartheta_a(uT_c) \\
&&&= \vartheta_a(u+\langle c,u\rangle c) \\ 
&&&= \vartheta_a(u)+\langle u,c\rangle^2\vartheta_a(c)+\langle u,\langle u,c\rangle c\rangle \\
&&&= \vartheta_a(u)+\langle c,u\rangle^2(\vartheta_a(c)+1). \\
& \text{For \eqref{eq:theta2}:} & \vartheta_b(u)&=\vartheta_0(u)+\langle b,u\rangle^2 \\
&&&= \vartheta_0(u)+\langle a,u\rangle^2+\langle a+b,u\rangle^2 \\
&&&= \vartheta_a(u)+\langle a+b,u\rangle^2. \\
& \text{For \eqref{eq:theta3}:} & \vartheta_a(a+b)&=\vartheta_a(a)+\vartheta_a(b)+\langle a,b\rangle \\
&&& =\vartheta_0(a)+\vartheta_0(b)+\langle a,b\rangle^2+\langle a,b\rangle.
\end{align*}
\eqref{eq:theta4} follows from \eqref{eq:theta1} and \eqref{eq:theta2}:
\begin{align*}
\vartheta_a^{T_{\gamma(a+b)}}(u)&=\vartheta_a(u)+(1+\vartheta_a(\gamma(a+b))\langle u,\gamma(a+b)\rangle^2 \\
&=\vartheta_b(u)+\langle a+b,u\rangle^2+(1+\vartheta_a(\gamma(a+b))\gamma^2\langle a+b,u\rangle^2 \\
&=\vartheta_b(u)+(1+\gamma^2+\gamma^4\vartheta_a(a+b))\langle a+b,u\rangle^2.
\end{align*}
Finally, we obtain \eqref{eq:theta5} by \eqref{eq:theta2},
\begin{align*}
1+\gamma^2+\gamma^4\vartheta_a(a+b)&=1+\gamma^2+\gamma^4(\vartheta_0(a)+\vartheta_0(b)+\langle a,b\rangle+\langle a,b\rangle^2) \\
&=\gamma^4(\frac{1}{\gamma^4}+\frac{1}{\gamma^{2}}+\langle a,b\rangle^2+\langle a,b\rangle+\vartheta_0(a)+\vartheta_0(b)). \qedhere
\end{align*}
\end{proof}

\begin{lem} \label{lm:traction-omega}
 Let $a,b\in V$, $a\neq b$. The following are equivalent:
 \begin{enumerate}[(i)]
     \item $\exists c\in V:\vartheta_a^{T_c}=\vartheta_b$;
     \item $\exists\gamma\in\mathbb{F}^*_q:\vartheta_a^{T_{\gamma(a+b)}}=\vartheta_b$;
     \item $\Tr_{\mathbb{F}_q/\mathbb{F}_{2}}(\vartheta_0(a))=\Tr_{\mathbb{F}_q/\mathbb{F}_{2}}(\vartheta_0(b))$;
    \item the polynomial $t^2+t+\vartheta_0(a)+\vartheta_0(b)$ is reducible over $\mathbb{F}_q$.
 \end{enumerate}
\end{lem}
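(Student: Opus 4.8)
The plan is to prove the cycle of implications (i)$\Rightarrow$(iii)$\Rightarrow$(ii)$\Rightarrow$(i), together with the elementary equivalence (iii)$\Leftrightarrow$(iv), using the explicit ``traction formulas'' of Lemma \ref{lm:traction-formulas}. The equivalence (iii)$\Leftrightarrow$(iv) is immediate: the polynomial $t^2+t+c$ over $\mathbb{F}_q$ (in characteristic two) is reducible if and only if $\Tr_{\mathbb{F}_q/\mathbb{F}_2}(c)=0$, a standard fact about Artin--Schreier polynomials, applied with $c=\vartheta_0(a)+\vartheta_0(b)$. So the real content is the chain linking (i), (ii), (iii).

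For (ii)$\Rightarrow$(i) there is nothing to do, since $\gamma(a+b)$ is a particular vector $c$. For (i)$\Rightarrow$(iii): suppose $\vartheta_a^{T_c}=\vartheta_b$ for some $c\in V$. By \eqref{eq:theta1} we have $\vartheta_a^{T_c}(u)=\vartheta_a(u)+\langle c,u\rangle^2(\vartheta_a(c)+1)$, while by \eqref{eq:theta2} we have $\vartheta_b(u)=\vartheta_a(u)+\langle a+b,u\rangle^2$. Comparing, $\langle c,u\rangle^2(\vartheta_a(c)+1)=\langle a+b,u\rangle^2$ for all $u$; since $a\neq b$ the right-hand side is a nonzero linear functional in $u$ (the form $\langle .,. \rangle$ is non-degenerate), so $\vartheta_a(c)\neq 1$ and $\langle c,u\rangle\sqrt{\vartheta_a(c)+1}=\langle a+b,u\rangle$ for all $u$, forcing $c=\mu(a+b)$ for the scalar $\mu=1/\sqrt{\vartheta_a(c)+1}\in\mathbb{F}_q^*$. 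In particular such a $c$ must be a scalar multiple of $a+b$, and then the condition $\vartheta_a^{T_c}=\vartheta_b$ together with \eqref{eq:theta4} says $1+\mu^2+\mu^4\vartheta_a(a+b)=0$. Rewriting via \eqref{eq:theta5} and dividing by $\mu^4$, this is $\frac{1}{\mu^4}+\frac{1}{\mu^2}+\langle a,b\rangle^2+\langle a,b\rangle+\vartheta_0(a)+\vartheta_0(b)=0$; taking $\Tr_{\mathbb{F}_q/\mathbb{F}_2}$ and using that $\Tr(z^2)=\Tr(z)$ kills $\frac{1}{\mu^4}+\frac{1}{\mu^2}$ and $\langle a,b\rangle^2+\langle a,b\rangle$, leaving $\Tr_{\mathbb{F}_q/\mathbb{F}_2}(\vartheta_0(a)+\vartheta_0(b))=0$, which is (iii).

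For (iii)$\Rightarrow$(ii), run the computation in reverse. By \eqref{eq:theta4}, $\vartheta_a^{T_{\gamma(a+b)}}=\vartheta_b$ holds if and only if the scalar coefficient $1+\gamma^2+\gamma^4\vartheta_a(a+b)$ vanishes, and by \eqref{eq:theta5} this coefficient equals $\gamma^4\bigl(x^2+x+\vartheta_0(a)+\vartheta_0(b)\bigr)$ where $x=\frac{1}{\gamma^2}+\langle a,b\rangle$; indeed $x^2+x=\frac{1}{\gamma^4}+\frac{1}{\gamma^2}+\langle a,b\rangle^2+\langle a,b\rangle$, matching the bracket in \eqref{eq:theta5}. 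Hence we need a $\gamma\in\mathbb{F}_q^*$ with $x^2+x=\vartheta_0(a)+\vartheta_0(b)$, i.e.\ a root $x$ of $t^2+t+\vartheta_0(a)+\vartheta_0(b)$ with $x\neq\langle a,b\rangle$ (so that $\frac1{\gamma^2}=x+\langle a,b\rangle\neq0$ and $\gamma$ exists via the unique square root). By (iii)/(iv) the polynomial has two distinct roots $x_0,x_0+1$ in $\mathbb{F}_q$; at least one of them differs from $\langle a,b\rangle$, which furnishes the required $\gamma$. The one point needing a word of care is exactly this ``pick the root $\neq\langle a,b\rangle$'' step --- it is what makes the hypothesis $a\neq b$ and the two-distinct-roots fact both get used --- but it is a one-line observation rather than a genuine obstacle; the bulk of the argument is just bookkeeping with the identities already recorded in Lemma \ref{lm:traction-formulas}.
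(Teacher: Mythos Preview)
Your proof is correct and follows essentially the same route as the paper's: both arguments reduce (i) to the observation that any $c$ with $\vartheta_a^{T_c}=\vartheta_b$ must be a scalar multiple of $a+b$, and both handle (iii)/(iv)$\Rightarrow$(ii) by solving the Artin--Schreier equation and reading off $\gamma$ via \eqref{eq:theta5}. Your version is in fact slightly more careful than the paper's in the step (iii)$\Rightarrow$(ii): you explicitly note that one must choose the root $x\neq\langle a,b\rangle$ so that $\gamma$ is well-defined, whereas the paper simply writes $\gamma=1/\sqrt{\langle a,b\rangle+t}$ without comment.
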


\begin{proof}
 (ii) implies (i) trivially. It is well known that $t^2+t+\lambda$ is reducible over $\mathbb{F}_q$ if and only if $\Tr_{\mathbb{F}_q/\mathbb{F}_{2}}(\lambda)=0$, see for instance \cite[Section 1.4]{Hirschfeld1} hence (iii)$\Leftrightarrow$(iv). Assume (iv) and let $t\in \mathbb{F}_q$ such that $t^2+t+\vartheta_0(a)+\vartheta_0(b)=0$. If $\gamma = \frac{1}{\sqrt{\langle a,b\rangle+t}}$, then \eqref{eq:theta4} implies (ii). Conversely, if (ii) is true, then $1+\gamma^2+\gamma^4\vartheta_a(a+b)=0$, and $t=\frac{1}{\gamma^2}+\langle a,b \rangle$ is a root of $t^2+t+\vartheta_0(a)+\vartheta_0(b)$ in $\mathbb{F}_q$. This proves $(ii)\Leftrightarrow (iv)$. In order to show $(i)\Rightarrow (ii)$, let 
 \begin{align*}
 \vartheta_a^{T_c}(u) &= \vartheta_a(u)+\langle u,c\rangle^2(1+\vartheta_a(c)) \\
 &= \vartheta_0(u)+\langle u,a\rangle^2+\langle u,c\sqrt{\vartheta_a(c)+1}\rangle^2 \\
 &=\vartheta_{a+c\sqrt{\vartheta_a(c)+1}}(u).
\end{align*}
By (i), $b=a+c\sqrt{\vartheta_a(c)+1}$, and since $a\neq b$, we have $\vartheta_a(c)+1\neq0$ and $c=\frac{a+b}{\sqrt{\vartheta_a(c)+1}}$.
\end{proof}

\subsection{Proofs of the theorems on the structure of symplectic groups}

\begin{proof}[Proof of Theorem \ref{th:structure_sp_2m_q}]
(i) Let $\Omega^+$ and $\Omega^-$ denote the set of hyperbolic and elliptic quadratic forms in $\Omega$. The elements in $\Omega^\pm$ are $GL(2m,q)$-equivalent, hence by Proposition \ref{lm:qforms-equiv}, they are $Sp(2m,q)$-equivalent. This means that $Sp(2m,q)$ has these two orbits in $\Omega$. As $\Tr_{\mathbb{F}_q/\mathbb{F}_2}(\vartheta_0(a))$ can take only two values $0$ and $1$, Lemma \ref{lm:traction-omega} implies that the orbits are also characterized by the equations
\begin{align*}
    \Tr_{\mathbb{F}_q/\mathbb{F}_2}(\vartheta_0(a))&=0 \\
    \Tr_{\mathbb{F}_q/\mathbb{F}_2}(\vartheta_0(a))&=1.
\end{align*}
Clearly, $\vartheta_0\in \Omega^+$ and $\Tr_{\mathbb{F}_q/\mathbb{F}_2}(\vartheta_0(0))=0$, which proves the claim.

(ii) If $\beta \in \mathbb{F}_q\setminus \{0\}$, then the number of solutions $a\in \mathbb{F}_q^{2m}$ of equation $\vartheta_0(a)=\beta$ is $q^{2m-1}-q^{m-1}$. For $\beta=0$, the number of solutions is $q^{2m-1}+q^{m-1}(q-1)$. This allows us to compute the number of solutions of $\Tr_{\mathbb{F}_q/\mathbb{F}_2}(\vartheta_0(a))=0$ and $\Tr_{\mathbb{F}_q/\mathbb{F}_2}(\vartheta_0(a))=1$. 

(iii) For any $a,b\in \mathbb{F}_q^{2m}$, we have
\[\vartheta_{a+b}=\vartheta_0(u)+\langle a+b,u \rangle^2 = \vartheta_a(u) + \langle b,u \rangle^2.\]
Assume $\vartheta_a^A=\vartheta_a$. Then 
\begin{align*}
    \vartheta_{a+b}^A(u) &= \vartheta_{a+b}(uA^{-1}) \\
    &= \vartheta_a(uA^{-1}) + \langle b,uA^{-1} \rangle^2 \\
    &= \vartheta_a(u) + \langle bA,u \rangle^2 \\
    &= \vartheta_{a+bA}(u).    
\end{align*}
This means that $b\mapsto \vartheta_{a+b}$ induces an equivalence between the linear action of the symplectic group on $\mathbb{F}_q^{2m}$, and the action of $Sp(2m,q)$ on $\Omega$. 
\end{proof}

\begin{proof}[Proof of Theorem \ref{th:action_sp_2m_2}]
The claim is equivalent to the fact that the stabilizers $R^\pm$ have three orbits: $\{\vartheta_a\}$, $\Omega^\pm \setminus\{\vartheta_a\}$, and $\Omega^\mp$. By Theorem \ref{th:structure_sp_2m_q}(iii), the stabilizers are permutation equivalent to the linear action of the orthogonal group $O^\pm(2m,2)$. Indeed, for $q=2$, the orthogonal groups have three orbits $\{0\}$, $\{ u \in \mathbb{F}_2^{2m}\setminus\{0\} \mid \vartheta_a(u)=0\}$ and $\{ u \in \mathbb{F}_2^{2m} \mid \vartheta_a(u)=1\}$. 
\end{proof}

\begin{proof}[Proof of Theorem \ref{th:symple-complements}]
The symplectic group $H_1=Sp(2m,q)$ itself is a complement to $N$ in $ASp(2m,q)$, the orbits of $H_1$ have length $1$ and $2^{2m}-1$. To find the second complement, we identify $\mathbb{F}_q^{2m}$ with $\Omega$ by $a\mapsto \vartheta_a$. The action of $Sp(2m,q)$ induces an action on $\mathbb{F}_q^{2m}$ which we denote by $a\mapsto a^A$ as well. 
Straightforward computation shows
\begin{align*}
\vartheta_{a+b+c}(u) &=\vartheta_0(u)+\langle a+b+c,u \rangle^2 \\
&=\vartheta_0(u)+\langle a,u \rangle^2+\vartheta_0(u)+\langle b,u \rangle^2+\vartheta_0(u)+\langle c,u \rangle^2,\\
&=\vartheta_a(u)+\vartheta_b(u)+\vartheta_c(u),
\end{align*}
and
\begin{align*}
\vartheta_a^A(u)&=\vartheta_a(uA^{-1}) \\
&=\vartheta_0(uA^{-1}) +\langle a,uA^{-1}\rangle^2\\
&=\vartheta_0^A(u)+\vartheta_0(u)+\vartheta_0(u)+\langle aA,u\rangle^2\\
&=\vartheta_0^A(u)+\vartheta_0(u)+\vartheta_{aA}(u).
\end{align*}
Fix $A\in Sp(2m,q)$ and define $b$ as $\vartheta_0^A=\vartheta_b$. Then, 
\[\vartheta_a^A = \vartheta_{aA+b}.\]
In other words, $a\mapsto a^A=aA+b$ is an element of $ASp(2m,q)$. Obviously, the maps $\{a\mapsto a^A \mid A\in Sp(2m,q)\}$ form a group $H_2\cong Sp(2m,q)$. The only translation in $H_2$ is the trivial one; therefore, $H_2$ is a complement of $N$. The orbits of $H_2$ have length $\frac{1}{2}(q^{2m}+q^m)$, $\frac{1}{2}(q^{2m}-q^m)$. The identification of $\mathbb{F}_q^{2m}$ and $\Omega$ is an equivalence between the actions of $H_2$ on the orbits and the actions of $Sp(2m,q)$ on $\Omega^+$ and $\Omega^-$. 
\end{proof}

\end{document}